\documentclass[12pt,reqno]{amsart}%%%%%%%%%%%%{article}
\usepackage {amssymb}
\usepackage {amsmath}
\usepackage{amsthm}
\usepackage{mathtools}
\usepackage{graphicx}
\usepackage[alphabetic]{amsrefs}
\usepackage[colorlinks, citecolor=blue]{hyperref}
\usepackage{verbatim,color,geometry}
\geometry{a4paper,top=3.5cm,bottom=3.8cm,left=2.5cm,right=2.5cm}
\usepackage[all]{xy}
\newtheorem{thm}{Theorem}[section]
\newtheorem{lemma}[thm]{Lemma}
\newtheorem{prop}[thm]{Proposition}
\newtheorem{cor}[thm]{Corollary}
\newtheorem{conj}[thm]{Conjecture}

\theoremstyle{definition}
\newtheorem{defn}[thm]{Definition}

\theoremstyle{remark}
\newtheorem{rmk}[thm]{Remark}

\theoremstyle{remark}

\theoremstyle{remark}

\theoremstyle{remark}

\theoremstyle{remark}

% The next two commands are for indicating that a commutative diagram is a pushout or pullback square
\newcommand{\po}{\ar@{}[dr]|{\text{\pigpenfont R}}}
\newcommand{\pb}{\ar@{}[dr]|{\text{\pigpenfont J}}}

\usepackage{cite}
\usepackage{amsfonts}
\title{On the finiteness of ample models}
\author{Junpeng Jiao}
\address{Department of Mathematics, University of Utah, Salt Lake City, UT 84112, USA}
\email{jiao@math.utah.edu}
\begin{document}

\maketitle
\begin{abstract}
	In this paper, we generalize the finiteness of models theorem in \cite{BCHM} to Kawamata log terminal pairs with fixed Kodaira dimension. As a consequence, we prove that a Kawamata log terminal pair with $\mathbb{R}-$boundary has a canonical model, and it can be approximated by log pairs with $\mathbb{Q}-$boundary and the same canonical model.
\end{abstract}
\section{Introduction}
Throughout this paper, the ground field $k$ is the field of complex numbers. The purpose of this paper is to prove the following theorems on the finiteness of ample models and good minimal models.

\begin{thm}[Finiteness of Ample Models]
	\label{main}
	Let $X$ be a projective normal variety of dimension $n$. Let $V$ be a finite dimensional affine subspace of the vector space $\mathrm{WDiv}_{\mathbb{R}}(X)$ which is defined over $\mathbb{Q}$. Fix a nonnegative integer $0\leq k\leq n$. Suppose $L\subset \mathcal{L}(V)$ is a closed rational polytope, such that for any $\Delta \in L$, $(X,\Delta)$ is klt and $\kappa(X,K_X+\Delta)=k$.

	Then there are finitely many rational contractions $\pi_j:X\dashrightarrow Z_j,1\leq i\leq l$, such that if $\pi : X\dashrightarrow Z$ is the ample model of $K_X+\Delta$ for some $\mathbb{R}-$divisor $\Delta \in L$, then there is an index $1\leq j \leq l$ and an isomorphism $\xi: Z_j\rightarrow Z$ such that $\pi = \xi \circ \pi_j$. 
\end{thm}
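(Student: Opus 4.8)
We outline the approach we would take; the details are carried out in the following sections.

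The plan is to reduce the statement to the case $k=n$, where $K_X+\Delta$ is big and the conclusion is a (by now standard) consequence of the finiteness of models theorem of \cite{BCHM}, by descending along the Iitaka fibration and applying the canonical bundle formula. \emph{Step 0 (easy cases and normalizations).} If $k=0$, then for every $\Delta\in L$ the ample model $Z$ of $K_X+\Delta$ satisfies $\dim Z=0$, so $Z=\operatorname{Spec}k$ and there is a single ample model. If $k=n$, then $K_X+\Delta$ is big for all $\Delta\in L$ and the assertion follows from \cite{BCHM}. So from now on assume $0<k<n$. Replacing $X$ by a common log resolution $\mu\colon Y\to X$ of the divisors spanning $V$ together with the boundaries in $L$, and $L$ by the corresponding polytope of log pullbacks $\Gamma_\Delta$ (so that $K_Y+\Gamma_\Delta=\mu^*(K_X+\Delta)+E_\Delta$ with $E_\Delta\ge 0$ exceptional, $(Y,\Gamma_\Delta)$ klt, and $\Gamma_\Delta, E_\Delta$ affine in $\Delta$), we may assume $X$ is smooth; this changes neither the ample models nor the Kodaira dimensions. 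Finally, since the conclusion concerns only those $\Delta$ for which the ample model exists, we may assume $R(X,K_X+\Delta)$ is finitely generated, hence that $(X,\Delta)$ has a good minimal model.

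\emph{Step 1 (a fibration valid over a subpolytope).} I would first subdivide $L$ into finitely many rational subpolytopes so that on each piece $L'$ the Iitaka fibration of $K_X+\Delta$ is independent of $\Delta\in L'$ up to birational equivalence; such a subdivision exists because the Iitaka fibration is pinned down by asymptotic invariants of $K_X+\Delta$ (the behaviour of the Nakayama--Zariski decomposition and of the fixed part), which are piecewise rational polyhedral on $L$ by the geography-of-models technology of \cite{BCHM}. Fixing one piece $L'$, after a further birational modification of $X$ and of the base I obtain a single morphism $f\colon X\to S$ with connected fibres and $\dim S=k$ that serves as the Iitaka fibration of $K_X+\Delta$ for every $\Delta\in L'$. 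Passing to a relative good minimal model of $(X,\Delta)$ over $S$ — available over all of $L'$ after one more modification, using the good minimal models of Step 0 and the relative MMP — I may further assume $K_X+\Delta\sim_{\mathbb{R},S}0$ for all $\Delta\in L'$, the general fibre being klt of Kodaira dimension $0$.

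\emph{Step 2 (canonical bundle formula and reduction to the big case).} For each $\Delta\in L'$ the canonical bundle formula gives $K_X+\Delta\sim_{\mathbb{R}}f^*(K_S+B_\Delta+M_\Delta)$, where $(S,B_\Delta+M_\Delta)$ is a generalized klt pair whose discriminant $B_\Delta$ and (the trace on a fixed model of) whose moduli b-divisor $M_\Delta$ I arrange, after one further rational subdivision of $L'$ and modification of $S$, to depend affinely on $\Delta$; since $\dim S=k=\kappa(X,K_X+\Delta)$, the class $K_S+B_\Delta+M_\Delta$ is big. As $K_X+\Delta$ is pulled back from $S$, its ample model equals the ample model of $K_S+B_\Delta+M_\Delta$ composed with $f$. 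Finiteness of ample models for the family of big generalized pairs $\{(S,B_\Delta+M_\Delta):\Delta\in L'\}$ over the fixed generalized pair structure then reduces to \cite{BCHM} by the standard passage from generalized polarized pairs with big log canonical class to usual klt pairs, and produces finitely many ample models $S\dashrightarrow T$. Composing with the finitely many fixed data ($\mu$, $f$, the subdivisions), using that ample models are unchanged under the birational modifications employed and that a composite of rational contractions is a rational contraction, one obtains finitely many rational contractions $\pi_j\colon X\dashrightarrow Z_j$ with the required universal property over $L'$. Summing over the finitely many pieces of $L$ gives the theorem.

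\emph{Main obstacle.} The crux is Step 1 together with the affine dependence in Step 2: producing one Iitaka fibration valid over an entire subpolytope and with trivial relative log canonical class, and then ensuring that the discriminant and the moduli part of the canonical bundle formula vary affinely in the parameter. The moduli part is a priori only a nef b-divisor defined up to birational modification, so controlling it simultaneously for all $\Delta\in L'$ is the delicate point; this relies on boundedness of the fibres that occur and on the existence of good minimal models for the klt pairs of dimension $n-k<n$ appearing as the fibres, which is what makes the relative triviality in Step 1 available. Everything else is either \cite{BCHM}, the canonical bundle formula, or formal manipulation of ample models.
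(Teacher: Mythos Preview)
Your overall strategy---descend along the Iitaka fibration, apply the canonical bundle formula, and reduce to the big case on the base via \cite{BCHM}---is exactly the paper's approach. However, two of your steps are not justified as written, and the paper's own arguments show how to avoid them.

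\textbf{Step 1 is not justified.} You claim the Iitaka fibration is locally constant on $L$ because ``asymptotic invariants\ldots are piecewise rational polyhedral on $L$ by the geography-of-models technology of \cite{BCHM}.'' But the geography of models in \cite{BCHM} is proved for polytopes of the form $\mathcal{L}_A(V)$ with $A$ ample (i.e.\ in the big regime); it is precisely what you are trying to establish here, so invoking it is circular. The paper instead proves directly (Lemma~\ref{thm2}) that for \emph{any} two $\mathbb{Q}$-divisors $D_1,D_2\in L$ the ample models $Z_{D_1},Z_{D_2}$ are birational: map $X$ to $Z_{D_1}\times Z_{D_2}$, note the image $Z$ carries an ample divisor pulled back to $\tfrac12(D_1+D_2)$ up to an effective error, and use $\kappa(X,\tfrac12(D_1+D_2))=k$ to force $\dim Z=k$. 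So no subdivision is needed for this step at all.

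\textbf{The passage to $K_X+\Delta\sim_{\mathbb{R},S}0$ is unnecessary and unjustified.} Achieving relative triviality over $S$ simultaneously for all $\Delta\in L'$ would require relative good minimal models, hence abundance for the fibres---an assumption the theorem does not make. The paper sidesteps this entirely: after weak semistable reduction one has an \emph{equidimensional} fibration $h\colon X\to Y$ with $Y$ smooth, and writes $K_X+\Delta\sim_{\mathbb{Q}}h^*D_\Delta+F_\Delta$ where $F_\Delta\ge 0$ and $h_*\mathcal O_X(\lfloor jF_\Delta\rfloor)\cong\mathcal O_Y$ for all $j$ (so $F_\Delta$ is irrelevant for sections). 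An elementary computation (Remark~\ref{piecewise linear}) shows $D_\Delta$ is piecewise $\mathbb{Q}$-linear in $\Delta$, giving the needed subdivision. The moduli part is then handled only at the finitely many \emph{vertices} $\Delta_i$: pass to a common Ambro model so each $J_i$ is nef, and use bigness of $D_i$ on $Y$ to write $(1+\epsilon)D_i\sim_{\mathbb{Q}}K_Y+A+C_i$ with $A$ ample and $(Y,A+C_i)$ klt. This produces a $\mathbb{Q}$-linear map $L_k\to\mathcal{L}_A(W)$ and \cite{BCHM} applies. No boundedness of fibres or fibrewise abundance is needed, contrary to what you flag as the ``main obstacle.''
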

\begin{thm}[Finiteness of Good Minimal Models]
	\label{main2}
	With the above notation, if for some $\mathbb{R}-$divisor $\Delta_0 \in \mathrm{int}(L)$, $K_X+\Delta_0$ has a good minimal model. Then for any $\Delta\in L$, $K_X+\Delta$ has a good minimal model. And there are finitely many birational contractions $\phi_j:X\dashrightarrow X_j$, $1\leq i\leq m$ such that if $\phi : X\dashrightarrow Y$ is good minimal model of $K_X+\Delta$, for some $\mathbb{R}-$divisor $\Delta \in L$, then there is an index $1\leq j \leq l$ such that $(Y,\phi_*\Delta)$ is crepant birational with $ (X_j,\phi_{j *}\Delta)$. 
\end{thm}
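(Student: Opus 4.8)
The plan is to derive Theorem~\ref{main2} from Theorem~\ref{main} together with the standard machinery of \cite{BCHM}, splitting the two assertions. I first record the reductions. Since $k=\kappa(X,K_X+\Delta)\ge 0$, for every $\Delta\in L$ the divisor $K_X+\Delta$ is $\mathbb{R}$-linearly equivalent to an effective divisor, hence pseudoeffective. If $(X,\Delta)$ has a good minimal model $\phi\colon X\dashrightarrow Y$, then $K_Y+\phi_*\Delta$ is semiample and the induced morphism $Y\to Z$ exhibits $Z$ as the ample model of $K_X+\Delta$; and any two good minimal models of a fixed pair are crepant birational, being connected by a sequence of flops over their common ample model. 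So it suffices to prove (i) every $K_X+\Delta$, $\Delta\in L$, has a good minimal model, and (ii) the resulting good minimal models fall into finitely many crepant birational classes.

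Assertion (ii) will follow from Theorem~\ref{main} once (i) is known. By (i) and the previous paragraph, each good minimal model $\phi\colon X\dashrightarrow Y$ of some $(X,\Delta)$, $\Delta\in L$, has ample model isomorphic to one of the finitely many $Z_1,\dots,Z_l$ produced by Theorem~\ref{main}. Fix $Z_j$ and let $L_j\subset L$ be the set of $\Delta$ with ample model $Z_j$; by the proof of Theorem~\ref{main} this is carved out by rational conditions, so we may work over a rational polytope. For $\Delta\in L_j$, a good minimal model $\phi\colon X\dashrightarrow Y$ satisfies $K_Y+\phi_*\Delta\equiv_{Z_j}0$, so $Y$ is a minimal model of $(X,\Delta)$ over $Z_j$; the finiteness-of-models arguments of \cite{BCHM} (now applicable over $Z_j$ with boundary varying in the polytope $L_j$, since every pair in question has a good minimal model) yield finitely many birational contractions $X\dashrightarrow X'$ realizing all such $Y$ up to isomorphism in codimension one, hence — using once more that two good minimal models of the same pair are crepant birational — up to crepant birational equivalence. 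Collecting representatives over $j=1,\dots,l$ gives the birational contractions $\phi_j\colon X\dashrightarrow X_j$ of the statement.

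It remains to prove (i): every $K_X+\Delta$, $\Delta\in L$, has a good minimal model, given that $K_X+\Delta_0$ does for some $\Delta_0\in\mathrm{int}(L)$. If $k=n$ this is immediate, for then $K_X+\Delta$ is big for all $\Delta\in L$ and \cite{BCHM} produces the canonical model, hence a good minimal model; the hypothesis on $\Delta_0$ is not even needed. For general $k$, set $L^{\circ}=\{\Delta\in L\colon K_X+\Delta\text{ has a good minimal model}\}$, which contains $\Delta_0$; the goal is $L^{\circ}=L$. Fix $\Delta\in L$. Since $\Delta_0$ is interior, I may choose $\Delta'\in L$ and $t\in(0,1]$ with $(1-t)(K_X+\Delta)+t(K_X+\Delta')=K_X+\Delta_0$. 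I would then run a $(K_X+\Delta)$-MMP with scaling of a general ample $\mathbb{Q}$-divisor (this can be run by \cite{BCHM}) and show it terminates with a good minimal model by comparing it step by step with the corresponding program for $K_X+\Delta_0$: the latter terminates with a good minimal model because $(X,\Delta_0)$ has one, and the convex relation above, combined with the constancy of the Kodaira dimension along $L$, is used to transfer the negativity of each step, the behaviour of the nef thresholds, and the goodness of the output to $K_X+\Delta$. I would first treat rational $\Delta$ and then pass to arbitrary $\Delta\in L$ by density together with the finiteness obtained above (equivalently, by showing $L^{\circ}$ is closed in $L$).

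The main obstacle is exactly this last point: propagating the existence of a good minimal model from the single interior boundary $\Delta_0$ to all of $L$. The hypothesis that $\kappa(X,K_X+\Delta)$ is constant on $L$ is indispensable here — without it $L^{\circ}$ is in general neither open nor closed — and making the comparison of the two minimal model programs rigorous is delicate: one must control the nef thresholds, guarantee that the steps which are negative for one boundary remain negative for the other after passing to a convex combination, and establish goodness of the output, which in the fibered range $k<n$ amounts to running a relative minimal model program over the (varying) Iitaka fibration and invoking abundance for the numerically trivial fibres. I expect this to be the technical heart of the proof; everything else is a bookkeeping reduction to Theorem~\ref{main} and \cite{BCHM}.
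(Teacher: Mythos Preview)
Your overall architecture---split into (i) existence and (ii) finiteness, deduce (ii) from Theorem~\ref{main}---matches the paper. But the way you propose to handle both steps diverges from what the paper does, and in (i) you have not closed the argument.

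For (i), the paper does \emph{not} run or compare any minimal model programs. Instead it invokes the characterisation (Lemma~\ref{vandk}, from \cite{hashizume2019minimal}): a klt pair $(X,\Delta)$ has a good minimal model if and only if $\kappa_\iota(X,K_X+\Delta)=\nu(X,K_X+\Delta)$. The hypothesis on $\Delta_0$ then says $\kappa_\iota=\nu$ at one interior point, and Lemma~\ref{numerical} propagates this equality to all of $L$ by the one-line chain
\[
\kappa_\iota(D)\le\nu(D)\le\nu(D_0)=\kappa_\iota(D_0)=k=\kappa_\iota(D),
\]
using only that an interior point has support containing the support of every boundary point (Proposition~\ref{prop}(4)) together with the constant Kodaira dimension. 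So the step you flag as ``the technical heart'' and leave unresolved is, in the paper, a two-line consequence of the numerical characterisation of good minimal models; no MMP comparison is needed. Your scaling-MMP strategy might be made to work, but as written it is only a sketch, and the issues you list (controlling thresholds, transferring negativity, proving goodness of the output) are exactly the ones the $\kappa_\iota=\nu$ criterion was designed to bypass.

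For (ii), your appeal to ``the finiteness-of-models arguments of \cite{BCHM}'' over each $Z_j$ is not justified as stated: those results (e.g.\ \cite[Theorem~E, Corollary~1.1.5]{BCHM}) require the boundary to contain a general ample $\mathbb{Q}$-divisor, which the $\Delta\in L_j$ need not. The paper instead proves a dedicated statement (Theorem~\ref{finiteness of good minimal model}): once all $\Delta$ in a rational polytope share the same ample model $Z$, one shows there are only countably many birational contractions of $X$ (Theorem~\ref{countably}), so by pigeonhole some contraction $f_1:X\dashrightarrow X_1$ is a good minimal model for a spanning set of rational $\Delta_j$; a direct crepant comparison over $Z$ then forces $f_1$ to be a good minimal model for \emph{every} $\Delta$ in the polytope. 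Combining this with the finite decomposition of $L$ into the $\mathcal{A}_{\pi,\phi}$-chambers from Theorem~\ref{main} gives the finiteness.
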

There are several interesting applications of these results. The first is an approximation of effective klt pair with $\mathbb{R}-$boundary.

\begin{cor}
	\label{approximation of real divisor}
	Suppose $(X,\Delta)$ is a klt pair with $\mathbb{R}-$boundary and $\kappa(X,K_X+\Delta)\geq 0$. Then for any $0<\epsilon\ll 1$, we can find finitely many $\mathbb{Q}-$divisor $\Delta_i$, $1\leq i\leq l$. Such that,
	\begin{enumerate}
		\item $\Delta$ is a convex $\mathbb{R}-$linear combination of $\Delta_i$.
		\item $||\Delta-\Delta_i||<\epsilon$.
		\item There is a $k-$dimensional normal variety $Z \cong \mathrm{Proj}\ R(X,K_X+\Delta_i)$ for every $i$.
	\end{enumerate}

\end{cor}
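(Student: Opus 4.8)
The plan is to place $\Delta$ inside a rational polytope on which Theorem~\ref{main} applies, and then to read off from the resulting finite list a single model shared by nearby rational boundaries. First I set up the linear algebra: let $V\subset\mathrm{WDiv}_{\mathbb{R}}(X)$ be the vector subspace spanned by the prime components of $\Delta$; it is defined over $\mathbb{Q}$ and $\Delta\in\mathcal{L}(V)$, with $k:=\kappa(X,K_X+\Delta)\ge 0$. The crucial step is to produce a closed rational polytope $L\subset\mathcal{L}(V)$ containing $\Delta$ in its relative interior and with $(X,\Theta)$ klt and $\kappa(X,K_X+\Theta)=k$ for every $\Theta\in L$. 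One should not expect $L$ to be full-dimensional in $V$: if a component of $\Delta$ is ``positive'', increasing its coefficient can strictly increase the Iitaka dimension of $K_X+\Theta$, so the fixed-Kodaira-dimension locus $\{\Theta\in\mathcal{L}(V):\kappa(X,K_X+\Theta)=k\}$ is in general a proper polytope through $\Delta$. The real content of this step is that this locus is a \emph{rational} polytope near $\Delta$, so that even an irrational $\Delta$ lands in the relative interior of a rational cell of it; I would establish this by the circle of ideas behind Theorem~\ref{main} — rational polyhedrality of pseudo-effectivity (\cite{BCHM}) together with control of the jumps of the invariant Iitaka dimension. This is the main obstacle; once $L$ is in hand the rest is formal.

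Granting such an $L$, apply Theorem~\ref{main}: there are finitely many rational contractions $\pi_1,\dots,\pi_l:X\dashrightarrow Z_j$, and — as the proof of Theorem~\ref{main} shows, in the manner of the finiteness-of-models theorem of \cite{BCHM} — $L$ is subdivided into finitely many rational subpolytopes on the relative interior of each of which the ample model of $K_X+\Theta$ is constant. Let $C$ be the cell with $\Delta\in\mathrm{relint}(C)$, and let $Z$ be the ample model common to all $K_X+\Theta$ for $\Theta\in\mathrm{relint}(C)$. In particular $K_X+\Delta$ itself admits $Z$ as its canonical model, which already produces the canonical model of the $\mathbb{R}$-boundary pair $(X,\Delta)$ asserted in the abstract.

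Finally I approximate. If $\Delta$ is rational, then $R(X,K_X+\Delta)$ is finitely generated by \cite{BCHM} and one simply takes $l=1$, $\Delta_1=\Delta$. Otherwise $\Delta$ is not a vertex of the subdivision, so $\mathrm{relint}(C)$ is an open subset of the rational affine space $\mathrm{aff}(C)$, of some dimension $d\ge 1$; choosing $\mathbb{Q}$-divisors $\Delta_1,\dots,\Delta_{d+1}\in\mathrm{relint}(C)$ within $\epsilon$ of $\Delta$ whose convex hull is a $d$-dimensional simplex containing $\Delta$ in its interior, we obtain $\Delta=\sum_i t_i\Delta_i$ with $t_i>0$ and $\sum_i t_i=1$, which is (1) and (2). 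Each $(X,\Delta_i)$ is klt with a $\mathbb{Q}$-boundary, so $R(X,K_X+\Delta_i)$ is finitely generated by \cite{BCHM}; hence $\mathrm{Proj}\,R(X,K_X+\Delta_i)$ exists, is a normal variety, and is the ample model of $K_X+\Delta_i$, which equals $Z$ since $\Delta_i\in\mathrm{relint}(C)$. Thus $Z\cong\mathrm{Proj}\,R(X,K_X+\Delta_i)$ for every $i$, and $\dim Z=\kappa(X,K_X+\Delta_i)=k$, giving (3).

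The essential difficulty is therefore confined to the construction of $L$: one must know that fixing the Kodaira dimension cuts out a \emph{rational polyhedral} locus in $\mathcal{L}(V)$, not merely a convex or semialgebraic one, since otherwise an irrational $\Delta$ could fail to lie in the relative interior of any rational polytope on which $\kappa$ is constant, and the approximation step would collapse. Everything downstream — Theorem~\ref{main}, its chamber refinement, and the finite generation of $R(X,K_X+\Delta_i)$ for $\mathbb{Q}$-boundaries (\cite{BCHM}) — is routine.
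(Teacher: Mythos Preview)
Your outline is structurally correct once the polytope $L$ exists, and from that point on it agrees with the paper's argument. The genuine gap is exactly where you flag it: you do not construct $L$. Appealing to ``the circle of ideas behind Theorem~\ref{main}'' or to ``rational polyhedrality of pseudo-effectivity'' is circular, since Theorem~\ref{main} takes such an $L$ as a hypothesis and does not produce one; and nothing in \cite{BCHM} directly gives you rational polyhedrality of the locus $\{\Theta:\kappa(X,K_X+\Theta)=k\}$ near an irrational point.

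The paper fills this gap with an elementary trick you are missing. Choose $m$ sufficiently divisible so that $h^0(X,mK_X+\lfloor m\Delta\rfloor)>0$, and set $B:=\tfrac{1}{m}\lfloor m\Delta\rfloor$. Then $B$ is a $\mathbb{Q}$-divisor with $0\le B\le\Delta$ and $\kappa(X,K_X+B)\ge 0$. Write $\Delta=B+\sum_j\alpha_jE_j$ with $\alpha_j>0$, let $V$ be the span of the $E_j$, and work in $\mathcal{L}_B(V)$; since $(X,\Delta)$ is klt and all $\alpha_j>0$, $\Delta$ lies in the interior. Fix $N\ge 0$ with $K_X+B\sim_{\mathbb{Q}}N$. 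For any interior point $D=B+\sum_j b_jE_j$ (so $b_j>0$) one has $K_X+D\sim_{\mathbb{R}}N+\sum_j b_jE_j$, whose support is $\mathrm{Supp}(N)\cup\bigcup_jE_j$, independent of $D$. By Proposition~\ref{prop}(4), $\kappa_\iota(X,K_X+D)$ depends only on this support, hence is constant on $\mathrm{int}(\mathcal{L}_B(V))$ and equals $k$. Now any small rational polytope $L\subset\mathrm{int}(\mathcal{L}_B(V))$ containing $\Delta$ satisfies the hypotheses of Theorem~\ref{main}, and your remaining steps (shrinking $L$ to a single chamber via Remark~\ref{rmk of ample model}, then picking nearby rational $\Delta_i$ in that chamber) go through verbatim. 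The point is that by first passing to the floor $B$, one forces the Iitaka dimension to be constant for the trivial reason that the supports of the effective representatives coincide, and no polyhedrality statement about the jump locus of $\kappa$ is needed.
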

\vspace{0.5cm}

The idea is to use the Kodaira type Canonical bundle formula on Iitaka fibration. 
Let $f:X\rightarrow Z$ be a surjective morphism of normal varieties with connected fiber, let $\Delta$ be a $\mathbb{Q}-$divisor on $X$ such that $(X,\Delta)$ is sub klt on a neighborhood of the generic fiber $F$. Suppose that 
$$K_X+\Delta\sim_{\mathbb{Q}} f^*D$$
for some $\mathbb{Q}-$Cartier $\mathbb{Q}-$divisor $D$ on $Y$, and $ H^0(F,\mathcal{O}_F(\lfloor -\Delta^{\leq 0} |_F\rfloor ))=0$. We say that $f$ is a $klt-$trivial fibration. The Kodaira type Canonical bundle formula says that 
$$D\sim_{\mathbb{Q}} K_Y+B_Y+M_Y,$$
where $B_Y$ is called the boundary part, and $M_Y$ is the moduli part, where the moduli part only depends on $(F,\Delta|_F)$.
Much is known about the birational behaviour of such formulas: In particular, it is known that, after passing to a certain birational model $Y'$ of $Y$, the divisor $M_{Y'}$ is nef and for any higher birational model $Y''\rightarrow Y'$ the induced moduli part $M_{Y''}$ on $Y''$ is the pullback of $M_{Y'}$. We call such a variety $Y'$ the Ambro model of $f$.

Two of the main conjectures in higher dimensional birational geometry are:
\begin{conj}[\textbf{B-semiampleness Conjecture}]
	\label{conj1}
	Let $(X,\Delta)$ be a sub pair and let $f:(X,\Delta) \rightarrow Y$ be a $klt-$trivial fibration to an $n-$dimension variety $Y$. If $Y$ is an Ambro model of $f$, then $M_Y$ is semiample.
\end{conj}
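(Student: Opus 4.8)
This is one of the central open problems of higher dimensional birational geometry, so what follows is a strategy rather than a complete argument; it organizes the known partial results and isolates where the difficulty lies. The plan is to translate b-semiampleness into a positivity statement for a Hodge bundle and then into the existence of a moduli space for the fibres. First I would perform the standard reductions: after a birational modification of $Y$ and a generically finite base change (Kawamata's covering trick together with semistable reduction), one reduces to the case where $\Delta\geq 0$, the pair $(X,\Delta)$ has a simple normal crossing structure over $Y^\circ=Y\setminus\Sigma$ for a simple normal crossing divisor $\Sigma\subset Y$, and the fibration is ``geometric'' over $Y^\circ$ with semistable degenerations along $\Sigma$. Because the moduli $b$-divisor is compatible with such operations, it suffices to prove its semiampleness after these modifications.

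Next I would identify $M_Y$ Hodge-theoretically. Over $Y^\circ$ the family of pairs $(X_y,\Delta_y)$ produces, via a suitable cyclic cover, a polarized variation of Hodge structure on the middle primitive cohomology of the fibres (Fujino--Mori, Ambro). By Deligne's canonical extension and the explicit canonical bundle formula of Fujita--Kawamata--Fujino, the moduli part on the Ambro model agrees up to $\mathbb{Q}$-linear equivalence with the determinant of the lowest piece of the canonically extended Hodge filtration, i.e.\ with a line bundle of the shape $\det f_*\omega_{X/Y}$ twisted by an explicit correction supported on $\Sigma$. By the Fujino--Fujisawa refinement of Fujita--Kawamata semipositivity this line bundle is \emph{nef} on the Ambro model; the entire content of Conjecture~\ref{conj1} is to upgrade ``nef'' to ``semiample''.

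To do that I would factor through a period map: the polarized VHS defines $\mu^\circ\colon Y^\circ\to\Gamma\backslash D$ to an arithmetic quotient of a period domain, which by the nilpotent orbit theorem (Schmid, Cattani--Kaplan--Schmid) and Borel's extension theorem spreads out to a morphism $\mu\colon Y'\to\overline{\Gamma\backslash D}$ to a toroidal-type compactification, and $M_Y$ becomes the pullback under $\mu$ of a line bundle $\Lambda$ carrying a metric of Griffiths-semipositive curvature. If one can interpose a \emph{quasi-projective} variety through which $\mu$ factors --- morally a moduli space of the fibres $(X_y,\Delta_y)$ --- and on which $\Lambda$ descends to an \emph{ample} line bundle, then $M_Y$ is the pullback of an ample divisor and is therefore semiample. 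This is exactly the mechanism that proves Conjecture~\ref{conj1} in all the cases currently known: relative dimension one (Prokhorov--Shokurov, where the moduli part is pulled back from a modular curve), $\dim Y=1$ (Ambro), and fibrations of abelian or Shimura type (Fujino and later refinements), where the moduli space is a Shimura variety and $\Lambda$ is its automorphic line bundle.

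The main obstacle is precisely this last step in full generality. The fibres $(X_y,\Delta_y)$ are arbitrary klt log Calabi--Yau pairs, for which no adequate moduli theory exists, and the period map need not be generically finite, so one cannot simply replace $Y$ by the normalization of the image of $\mu$. Even granting a projective compactification of that image, the Hodge-theoretic bundle $\Lambda$ on it is known only to be nef (and big on the locus where $\mu$ is an immersion); showing it is \emph{semiample} is essentially an abundance statement for Hodge bundles, for which no general method is available. I therefore expect the genuine difficulty to be exactly the passage from ``Hodge-theoretically semipositive and nef'' to ``semiample'' --- equivalently, a form of abundance for the moduli $b$-divisor --- rather than any of the preparatory Hodge-theoretic or birational reductions.
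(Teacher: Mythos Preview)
The statement you were asked to address is labeled a \emph{Conjecture} in the paper, and the paper does not attempt to prove it; it is stated only as a standing hypothesis (assumed in dimension $\leq n-1$) in the proof of Corollary~\ref{nonvanishing}. There is therefore no proof in the paper to compare your proposal against.

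That said, your proposal is appropriate in spirit: you correctly identify this as an open problem, accurately summarize the Hodge-theoretic description of the moduli part (Fujino--Mori, Ambro), the known cases (relative dimension one, $\dim Y=1$, abelian/Shimura-type fibrations), and you correctly isolate the genuine obstruction---the absence of an adequate moduli theory for log Calabi--Yau fibres and the lack of a mechanism to upgrade nefness of the Hodge bundle to semiampleness. Since the paper offers no argument at all for Conjecture~\ref{conj1}, your survey of the strategy is more than what the paper contains; just be careful not to present it as a proof, which you have not done.
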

\begin{conj}[\textbf{Nonvanishing}]
	\label{conj2}
	Let $(X,\Delta)$ be a klt pair with $\mathbb{Q}-$boundary. If $K_X+\Delta$ is pseudo effective, then there exists an effective $\mathbb{Q}-$divisor $D$ such that
	$K_X+\Delta\sim_{\mathbb{Q}} D$.
\end{conj}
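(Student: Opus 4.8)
The plan is to prove the Nonvanishing Conjecture by the standard three-step strategy: reduce to a minimal model via the minimal model program, split into two cases according to the Nakayama--Zariski numerical dimension $\kappa_\sigma(X,K_X+\Delta)$, and run induction on $\dim X$ through the Iitaka fibration using the canonical bundle formula described above. Since $K_X+\Delta$ is pseudoeffective, a $(K_X+\Delta)$-MMP with scaling cannot terminate with a Mori fiber space, so one would like to replace $(X,\Delta)$ by a minimal model on which $K_X+\Delta$ is nef; such a birational modification does not change the section ring $R(X,K_X+\Delta)$, hence not the conclusion. The caveat is that termination of this MMP is itself open in general, so unconditionally one may only run it as far as is known (e.g. in dimension $\le 3$, or when a minimal model is already known to exist).

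In the first case, $\kappa_\sigma(X,K_X+\Delta)=0$: then $K_X+\Delta\equiv N$ for the effective $\mathbb{R}$-divisor $N=N_\sigma(K_X+\Delta)$, and after passing to a crepant birational model one reduces to the situation $K_X+\Delta\equiv N$ with $N$ effective and supported in the non-movable locus. Here the desired conclusion, namely $K_X+\Delta\sim_{\mathbb{Q}} D$ for an effective $\mathbb{Q}$-divisor $D$, follows from the known abundance-type results for log canonical divisors of numerical dimension zero (work of Nakayama, Gongyo, and others). This case is therefore essentially a theorem and serves as the base of the induction.

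In the second case, $\kappa_\sigma(X,K_X+\Delta)>0$: after a birational modification $X'\to X$ one constructs a fibration $f\colon X'\to Z$ with $\dim Z=\kappa_\sigma(X,K_X+\Delta)$ realizing the numerical Iitaka dimension, so that on a general fiber $F$ the restriction $(K_{X'}+\Delta')|_F$ has strictly smaller numerical dimension; by the induction hypothesis $\kappa\big(F,(K_{X'}+\Delta')|_F\big)\ge 0$. One then lifts sections from $F$ to $X'$ using the Hacon--McKernan/Takayama extension theorems, organised through the canonical bundle formula
\[ K_{X'}+\Delta'\sim_{\mathbb{Q}} f^*(K_Z+B_Z+M_Z)+(\text{vertical term}), \]
so that non-vanishing for $(Z,B_Z)$ in lower dimension, together with semiampleness of the moduli part $M_Z$ (Conjecture~\ref{conj1}), yields $\kappa(Z,K_Z+B_Z+M_Z)\ge 0$, and pulling back gives $\kappa(X',K_{X'}+\Delta')\ge 0$, hence the claim.

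The main obstacle is the second case in dimension $\ge 4$. Setting up the Iitaka--Nakayama fibration in a usable form needs the MMP to run and terminate, which is not available unconditionally; and even granting the fibration, passing from $\kappa_\sigma\ge 0$ on $F$ back to $\kappa\ge 0$ on $X'$ requires both the B-semiampleness Conjecture~\ref{conj1} for the moduli part and a non-vanishing input for $K_Z+B_Z$, which is itself a lower-dimensional instance of abundance. Thus Nonvanishing is inextricably tied to Abundance and to Conjecture~\ref{conj1}; the realistic target is the conditional implication ``Conjecture~\ref{conj1} together with the MMP and abundance in dimension $<\dim X$ imply Conjecture~\ref{conj2}'', with the unconditional cases being $\dim X\le 3$ and $\kappa_\sigma(X,K_X+\Delta)=0$.
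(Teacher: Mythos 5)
There is a fundamental mismatch here: the statement you are trying to prove is stated in the paper as Conjecture~\ref{conj2}, i.e.\ as an \emph{open} problem. The paper never proves it; it only \emph{assumes} it (together with Conjecture~\ref{conj1}) in dimension $n-1$ in order to deduce the conditional Corollary~\ref{nonvanishing}. Your proposal does not close this gap: it is a reduction of the conjecture to other open conjectures rather than a proof. Concretely, your first step already requires termination of the $(K_X+\Delta)$-MMP with scaling (open in dimension $\geq 4$); your second case requires (i) a fibration $f\colon X'\to Z$ with $\dim Z=\kappa_\sigma(X,K_X+\Delta)$ on whose general fibers the numerical dimension drops, which is not available unconditionally, (ii) the Hacon--McKernan/Takayama extension theorems, whose hypotheses (suitable positivity and base-locus conditions for the restricted divisor) you never verify, (iii) semiampleness of the moduli part, i.e.\ Conjecture~\ref{conj1}, and (iv) nonvanishing for the generalized pair $(Z,B_Z+M_Z)$ on the base, which is not literally the induction hypothesis you set up (the induction hypothesis is for klt pairs, not for pairs with a nef moduli part unless Conjecture~\ref{conj1} is invoked first). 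You acknowledge all of this yourself in the last paragraph, where you retreat to the claim that the ``realistic target'' is a conditional implication; that concession is exactly the admission that the statement has not been proved. Only your $\kappa_\sigma=0$ case is an actual theorem (Nakayama, Gongyo), and that alone does not yield the conjecture.

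For what it is worth, the conditional route you sketch in the second case is close in spirit to what the paper actually does in its proof of Corollary~\ref{nonvanishing}: there the author also passes to the Iitaka fibration of nearby effective divisors $K_X+\Delta+\epsilon H$, applies the canonical bundle formula to write $D_0\sim_{\mathbb{Q}}K_Y+B+J$ on the base, and then uses Conjecture~\ref{conj1} to make $J$ semiample and Conjecture~\ref{conj2} in dimension $\leq n-1$ to conclude. But that argument is explicitly conditional and only treats the special situation where $K_X+\Delta$ is a limit of effective divisors of Kodaira dimension $<n$; neither it nor your proposal establishes Conjecture~\ref{conj2} itself.
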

Assume these two conjectures hold in dimension $n-1$, we prove the following statement which says that if a pseudo effective klt pair is the limit of effective pairs with small Kodaira dimension, then it is effective.
\begin{cor}
	\label{nonvanishing}
	Assume Conjecture \ref{conj1} and Conjecture \ref{conj2} hold in dimension $n-1$. Let $(X,\Delta)$ be a klt pair with $\mathbb{Q}-$boundary. Suppose $K_X+\Delta$ is pseudo effective, and there is an effective $\mathbb{Q}-$divisor $H$ on $X$, such that for any $0<\epsilon\ll 1$, we have
	$$0\leq\kappa(X,K_X+\Delta+\epsilon H)<n.$$
	Then there exists an effecitve $\mathbb{Q}-$divisor $D$ such that $K_X+\Delta\sim_{\mathbb{Q}} D$.
\end{cor}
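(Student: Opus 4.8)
The plan is to reduce, via the Iitaka fibration of a nearby divisor and the canonical bundle formula, to an instance of the nonvanishing conjecture on a variety of dimension at most $n-1$. To this end, first I would record that, since $H\geq 0$, the function $\epsilon\mapsto \kappa(X,K_X+\Delta+\epsilon H)$ is non-decreasing on $[0,\infty)$; as it takes values in $\{0,1,\dots,n-1\}$ for all $0<\epsilon\ll 1$, there are $\epsilon_0>0$ and an integer $0\leq k\leq n-1$ with $\kappa(X,K_X+\Delta+\epsilon H)=k$ for every $\epsilon\in(0,\epsilon_0]$, and after shrinking $\epsilon_0$ we may assume $(X,\Delta+\epsilon_0 H)$ is klt. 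The case $k=0$ is then elementary: for $\epsilon\in(0,\epsilon_0]$ the effective divisor $D_\epsilon\sim_{\mathbb{Q}}K_X+\Delta+\epsilon H$ is the unique effective divisor in its $\mathbb{Q}$-linear class, so $D_\epsilon=D_{\epsilon_0}-(\epsilon_0-\epsilon)H$; letting $\epsilon\to 0^+$ gives $D_{\epsilon_0}\geq \epsilon_0 H$, hence $D_{\epsilon_0}-\epsilon_0 H\geq 0$, and this divisor is $\mathbb{Q}$-linearly equivalent to $K_X+\Delta$. So I may assume $k\geq 1$.

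Next I would pass to the Iitaka fibration $f\colon X'\to Z$ of $K_X+\Delta+\epsilon_0 H$, with $\mu\colon X'\to X$ birational; replacing $(X,\Delta)$ by a crepant model we may write $\mu^*(K_X+\Delta)=K_{X'}+\Delta'$ with $(X',\Delta')$ sub-klt and $H'=\mu^*H$, and here $\dim Z=k$ while $\dim F=n-k\geq 1$ for a general fibre $F$. Applying the same monotonicity-plus-uniqueness argument on $F$ --- the divisors $(\mu^*D_\epsilon)|_F$ are effective of Iitaka dimension $0$ on $F$, hence unique in their classes --- shows that $\Theta_F:=(\mu^*D_{\epsilon_0})|_F-\epsilon_0 H'|_F\geq 0$, that $\Theta_F\sim_{\mathbb{Q}}(K_{X'}+\Delta')|_F$, and that $\Theta_F$ has Iitaka dimension $0$ on $F$ (alternatively, one may here invoke the present corollary in the smaller dimension $\dim F$ via adjunction). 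Spreading $\Theta_F$ out over the generic point of $Z$ and performing the standard birational manipulations of the canonical bundle formula, I would obtain a klt-trivial fibration $g\colon(W,\Gamma)\to Z$ birational to $f$, with $\Gamma$ effective over the generic point of $Z$, with $\kappa(W,K_W+\Gamma)=\kappa(X,K_X+\Delta)$, and with
\[
K_W+\Gamma\sim_{\mathbb{Q}} g^*(K_Z+B_Z+M_Z),
\]
where $(Z,B_Z)$ is klt with $B_Z\geq 0$ and, after replacing $Z$ by the Ambro model of $g$, $M_Z$ is nef. Since $K_X+\Delta$ is pseudo-effective, so is $K_Z+B_Z+M_Z$. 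By the assumed case of Conjecture \ref{conj1} the divisor $M_Z$ is semiample, so by a standard Bertini argument there is an effective $\mathbb{Q}$-divisor $M_Z'\sim_{\mathbb{Q}}M_Z$ with $(Z,B_Z+M_Z')$ klt; then $K_Z+B_Z+M_Z'$ is pseudo-effective, and by the assumed case of Conjecture \ref{conj2} there is an effective $\mathbb{Q}$-divisor $D_Z$ with $K_Z+B_Z+M_Z'\sim_{\mathbb{Q}}D_Z$. Pulling back gives $K_W+\Gamma\sim_{\mathbb{Q}}g^*D_Z\geq 0$, hence $\kappa(X,K_X+\Delta)=\kappa(W,K_W+\Gamma)\geq 0$, which is the assertion.

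The main obstacle is the construction in the second paragraph: one must arrange the canonical bundle formula so that the boundary part $B_Z$ is effective and, above all, so that passing to the klt-trivial model $(W,\Gamma)$ and then to the Ambro model preserves both the Kodaira dimension and pseudo-effectiveness of the log canonical divisor. The horizontal direction is pinned down by the fibrewise uniqueness of $\Theta_F$, but controlling the vertical and exceptional divisors introduced in the process --- so that no extra sections are created --- requires care; this, together with the Bertini step needed to keep the pair klt after replacing the nef moduli part by an effective divisor, is where the technical weight of the argument lies.
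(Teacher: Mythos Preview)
Your outline matches the paper's proof closely: reduce to a constant Iitaka dimension $k$ by monotonicity, dispose of $k=0$ by uniqueness of effective representatives, and for $k\geq 1$ pass to the Iitaka fibration, apply the canonical bundle formula on the base, and invoke the two conjectures there.

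Two points of comparison are worth recording. First, on the generic fibre you re-run the $k=0$ limit argument to obtain $(K_{X'}+\Delta')|_F\sim_{\mathbb Q}\Theta_F\geq 0$ directly, whereas the paper instead observes that $K_F+\Delta_F$ is pseudo-effective as a limit and applies Conjecture~\ref{conj2} in dimension $n-k\leq n-1$; your route is a little more self-contained and avoids a second appeal to nonvanishing. Second, the step you label ``spreading $\Theta_F$ out'' is precisely where the paper supplies a concrete mechanism that your sketch lacks: knowing $(K_X+\Delta)|_F$ is effective does not by itself yield a global decomposition $K_X+\Delta\sim_{\mathbb Q}h^*D_0+F_0$ with $h_*\mathcal O_X(\lfloor iF_0\rfloor)\cong\mathcal O_Y$, because $K_X+\Delta$ is not yet known to be effective. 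The paper's trick is to choose $A$ ample on $Y$ so that $K_X+\Delta+h^*A$ is effective, apply the decomposition of Remark~\ref{piecewise linear} to $K_X+\Delta+h^*A+tH$ for $0\leq t\ll 1$, use that $D'_t$ is linear in $t$ to specialise to $t=0$, and then subtract $A$. This is exactly the control over vertical and exceptional parts you flag as the main obstacle; with it in hand, pseudo-effectivity of $D_0$ follows as the limit of the big divisors $D_t$, and the rest of your argument goes through.
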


Another interesting application concerns the MMP with scaling.
\begin{cor}
	\label{MMP with scaling}
	Let $(X,\Delta)$ be a klt pair with $\mathbb{Q}-$boundary, $\kappa(X,K_X+\Delta)\geq 0$ and $H$ is a pseudo effective $\mathbb{Q}-$divisor, such that $(X,\Delta+H)$ is klt. Suppose we can run the $K_X+\Delta$ MMP with scaling of $H$ to get a sequence $\phi_i:X_i\dashrightarrow X_{i+1}$ of $K_X+\Delta$ flips and divisorial contractions and real numbers $1\geq \lambda_1\geq \lambda_2\geq ...$ such that $K_{X_i}+\Delta_i+t H_i$ is nef for $t\in[\lambda_i,\lambda_{i+1}]$. Let $\lambda:=\lim\limits_{i\rightarrow \infty} \lambda_i$. If $K_X+\Delta +H$ has a good minimal model and $\lambda\neq \lambda_i$ for any $i\in \mathbb{N}$, then $\lambda =0$
	
\end{cor}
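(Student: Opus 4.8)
\emph{Proof proposal.} The plan is to argue by contradiction, using the finiteness of ample models (Theorem~\ref{main}). Assume $\lambda>0$. I will realize the divisors $K_X+\Delta+tH$, for $t$ ranging over a rational interval reaching strictly below $\lambda$, as a family of klt pairs of one fixed Kodaira dimension all admitting good minimal models, and then observe that the MMP with scaling of $H$ forces infinitely many distinct ample models inside this family — which Theorem~\ref{main} forbids.

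First I would record the routine reductions on the MMP. Since the $\lambda_i$ are non‑increasing with limit $\lambda$ and $\lambda_i\ne\lambda$ for all $i$, we have $\lambda_i>\lambda>0$ for every $i$, $\lambda_1>\lambda$, and $\lambda_i>\lambda_{i+1}$ for infinitely many $i$; because a divisorial contraction drops the Picard number, only finitely many of the $\phi_i$ are divisorial, and after discarding finitely many steps every $\phi_i$ is a flip, so the composite contractions $\psi_i:X\dashrightarrow X_i$ are small and pairwise distinct and the $X_i$ pairwise non‑isomorphic. Choosing a rational $t_i\in(\lambda_{i+1},\lambda_i)$ for each $i$ gives $t_i\in(0,1)$ and $t_i\to\lambda^{+}$; since for $j<i$ the ray contracted at step $j$ satisfies $(K_{X_j}+\Delta_j)\cdot R_j<0$ and $(K_{X_j}+\Delta_j+\lambda_{j+1}H_j)\cdot R_j=0$ with $\lambda_{j+1}\ge\lambda_i>t_i>0$, a one‑line computation gives $(K_{X_j}+\Delta_j+t_iH_j)\cdot R_j<0$, so $\psi_i$ is $(K_X+\Delta+t_iH)$‑negative while $K_{X_i}+\Delta_i+t_iH_i$ is nef; hence $X_i$ is a minimal model of $(X,\Delta+t_iH)$.

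The crucial step is to verify the hypotheses of Theorem~\ref{main} for this family. Running the MMP with scaling of $H$ presupposes $K_X+\Delta+H$ nef, and as it also has a good minimal model it is in fact semiample; let $f:X\to Z$ be its semiample fibration, $K_X+\Delta+H\sim_{\mathbb{Q}}f^{*}A$ with $A$ ample, $\dim Z=\kappa(X,K_X+\Delta+H)$. Write $K_X+\Delta\sim_{\mathbb{Q}}E\ge0$ using $\kappa(X,K_X+\Delta)\ge0$. On a general fibre $F$ of $f$ we get $E|_F\sim_{\mathbb{R}}-(H|_F)$ with $H|_F$ pseudo‑effective, so $E|_F$ and $H|_F$ are pseudo‑effective classes summing to zero, hence numerically trivial; in particular $\kappa(F,E|_F)=0$, so the fibres of $f$ are log Calabi--Yau. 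Since $K_X+\Delta+tH\sim_{\mathbb{R}}tf^{*}A+(1-t)E$, easy addition together with effectivity of $E$ pins $\kappa(X,K_X+\Delta+tH)=\dim Z=:k$ for all $t\in(0,1]$, and the analogous estimate for numerical dimensions makes $K_X+\Delta+tH$ abundant; combined with the existence of good minimal models in the nef abundant case — equivalently, via the canonical bundle formula for the log Calabi--Yau fibration $f$ recalled earlier — each nef $K_{X_i}+\Delta_i+t_iH_i$ is semiample, so $X_i$ is a \emph{good} minimal model of $(X,\Delta+t_iH)$. Now fix rational numbers $0<r<\lambda<r'<\lambda_1$, let $V$ be the $\mathbb{Q}$‑defined affine span of $\Delta$ and $\Delta+H$, and put $L:=\{\Delta+tH:r\le t\le r'\}\subset\mathcal{L}(V)$; this is a closed rational polytope on which every pair is klt of Kodaira dimension $k$, and $\Delta+t_iH\in L$ for all $i\gg0$.

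Finally I would derive the contradiction. For $i\gg0$ the ample model $\pi_i:X\dashrightarrow Z_i=\mathrm{Proj}\,R(X,K_X+\Delta+t_iH)$ exists and factors as $X\xrightarrow{\psi_i}X_i\xrightarrow{g_i}Z_i$, with $g_i$ contracting exactly the $(K_{X_i}+\Delta_i+t_iH_i)$‑trivial curves. Theorem~\ref{main} applied to $L$ gives finitely many rational contractions through which every such $\pi_i$ factors up to isomorphism of the target, and — via the accompanying wall‑and‑chamber decomposition of $L$ — the ample model of $K_X+\Delta+tH$ changes at only finitely many $t\in[r,r']$. But it changes at every $\lambda_i\in(r,r')$ at which the minimal model changes: the first flip beyond the model valid just above $\lambda_i$ contracts a curve $C$ which is $(K+\Delta+tH)$‑positive just above $\lambda_i$, and whose flipped transform is positive just below $\lambda_i$, so $C$ (resp.\ its transform) is not contracted by the ample‑model morphism on either side, whence no single rational map $X\dashrightarrow Z$ can be the ample model on both sides of $\lambda_i$. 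Since infinitely many such $\lambda_i$ lie in $(r,r')$, the ample model changes infinitely often there, contradicting Theorem~\ref{main}; therefore $\lambda=0$. The main obstacle is the third step — showing that the whole one‑parameter family $K_X+\Delta+tH$, including members with $t$ below $\lambda$, has constant Kodaira dimension and admits good minimal models, so that Theorem~\ref{main} applies; this rests on the identity $E|_F\equiv0$, which makes the fibres of $f$ log Calabi--Yau and reduces the good‑minimal‑model statement to the abundant, fibred‑by‑good‑models case. Once this is set up, the passage from finiteness of models to a contradiction with non‑termination of the $\lambda_i$ is the standard mechanism by which finiteness of models yields termination of the MMP with scaling, modulo the elementary flip computation identifying each drop $\lambda_{i}>\lambda_{i+1}$ as a genuine wall for the ample model.
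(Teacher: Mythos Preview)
Your strategy---establish that $K_X+\Delta+tH$ has constant Kodaira dimension $k$ and admits a good minimal model for every $t\in(0,1]$, then invoke finiteness of models on a rational interval around $\lambda$ to contradict non\nobreakdash-termination---is exactly the paper's. For the first part the paper is more economical: rather than analysing the semiample fibration of $K_X+\Delta+H$ and restricting to fibres, it simply writes $K_X+\Delta+tH\sim_{\mathbb R}(1-t)E+tN$ with $E,N\ge0$ and uses Proposition~\ref{prop}(3),(4) to obtain the chain $k=\kappa_\iota(K_X+\Delta+H)\le\kappa_\iota(K_X+\Delta+tH)\le\nu(K_X+\Delta+tH)\le\nu(K_X+\Delta+H)=k$, whence good minimal models by Lemma~\ref{vandk}. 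Your fibre argument also works, but is longer.

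The substantive issue is your final contradiction. You claim the ample model must change at each $\lambda_i$, reasoning that the flipping curve $C$ and its flip $C^+$ are not contracted by the ample\nobreakdash-model morphisms on either side, ``whence no single rational map $X\dashrightarrow Z$ can be the ample model on both sides of $\lambda_i$''. That ``whence'' is unjustified: nothing you have said prevents the same $Z$ from receiving morphisms from both $X_{i-1}$ and $X_i$, neither contracting the respective curve---flops over a fixed base behave exactly this way. The paper avoids this by invoking Theorem~\ref{main2} in addition to Theorem~\ref{main}: after restricting to an interval $[\lambda,\lambda+\epsilon]$ with constant ample model $Z$, Theorem~\ref{main2} produces a single $f_j:X\dashrightarrow Y_j$ that is a good minimal model for all $t$ in some subinterval $(t_j,t_{j+1})$ containing two consecutive $\lambda_i$'s; then, using that $K_{Y_j}+f_{j*}\Delta$ and $f_{j*}H$ are both pulled back from $Z$, the paper shows $(X_i,\phi_{i*}(\Delta+tH))$ and $(Y_j,f_{j*}(\Delta+tH))$ are crepant birational for \emph{all} $t$, so $X_i$ remains a minimal model throughout $(t_j,t_{j+1})$, contradicting the definition of the MMP with scaling. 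Your route can be salvaged without Theorem~\ref{main2} by noting that if the ample model is constant near $\lambda_i$ then, taking two values of $t$ on each side, both $K_{X_{i-1}}+\Delta_{i-1}$ and $H_{i-1}$ are separately $\mathbb Q$\nobreakdash-pullbacks from $Z$; convexity of the ample cone on $Z$ then forces the class at $t=\lambda_i$ to be ample, so $g_{i-1}$ contracts $C$, contradicting $(K_{X_{i-1}}+\Delta_{i-1})\cdot C<0$. But this extra step is essential and is precisely what your sketch omits.
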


\begin{proof}[\textbf{\emph{Acknowledgement}}]
	I would like to thank my advisor, Professor Christopher Hacon, for many useful suggestions, discussions, and his generosity. I also thank Professor Kenta Hashizume and Jingjun Han for helpful comments and references.
\end{proof}

\section{Preliminary}
\begin{defn}
	A log pair $(X,\Delta)$ consists of a projective normal variety $X$ and an effective $\mathbb{R}-$Weil divisor $\Delta$ such that $K_X+\Delta$ is $\mathbb{R}-$Cartier. We say a pair $(X,\Delta)$ has $\mathbb{Q}$ boundary (respectively $\mathbb{R}$ boundary) if $\Delta$ is $\mathbb{Q}-$Weil(respectively $\mathbb{R}-$Weil ). We say a pair $(X,\Delta)$ is klt (respectively lc) if the discrepancies satisfy $a(E,X,\Delta)>-1$ (respectively $\geq -1$) for every prime divisor $E$ over $X$.
\end{defn}

\begin{defn}
	Let $\mathbb{K}$ denote either the rational number field $\mathbb{Q}$ or the real number field $\mathbb{R}$. Let $\pi:X\rightarrow U$ be a morphism of projective normal varieties, let $V$ be a finite dimensional affine subspace of the $\mathbb{K}-$vector space $\mathrm{WDiv}_{\mathbb{K}}(X)$ of Weil divisors on $X$. For a $\mathbb{K}$-divisor $A$, define
	\begin{enumerate}
		\item $V_A=\{\Delta\ \ |\ \Delta =A+B, B\in V\}$,
		\item $\mathcal{L}(V)=\{\Delta\in V\ |\ (X,\Delta)$ is log canonical$\}$,
		\item $\mathcal{L}_A(V)=\{\Delta=A+B \in V_A\ \ |\ (X,\Delta)$ is log canonical and $B \geq 0\}$,
		\item $\mathcal{E}(V)=\{\Delta\in \mathcal{L}(V)\ | K_X+\Delta$ is pseudo effective$\}$,
		\item And given a rational contraction $\phi:X\dashrightarrow Z$, define
		\\ $\mathcal{A}_{\pi,\phi}(V)=\{\Delta \in \mathcal{L}(V)\ |$ $Z$ is the ample model of $K_X+\Delta$ over $U\ \}$.
	\end{enumerate}
\end{defn}
For an $\mathbb{R}-$divisor $D=\sum d_iD_i$ where the $D_i$ are the irreducible components of $D$, define $||D_i|| := \max\{|d_i|\}$.
\begin{defn}
	Let $X$ be a projective normal variety. Let $V$ be a finite dimensional affine subspace of the vector space $\mathrm{WDiv}_{\mathbb{R}}(X)$. Let $ \Omega\subset \mathcal{L}(V)$ be a subset, define 
	$$\mathrm{totaldiscrep}(X,\Omega):=\inf_{E,D}\{a(E,X,D)\ |\ E\ \mathrm{is\ a\ prime\ divisor\ over\ }X,\ D\in \Omega\}$$
\end{defn}
\begin{defn}
	Let $X$ be a projective normal variety, $D$ an $\mathbb{R}-$Cartier divisor. If $D\geq 0$, define the Iitaka dimension to be
	$$\kappa(X,D)=\mathrm{max}\{ k\in \mathbb{Z}_{\geq 0}\ |\ \limsup_{m\rightarrow \infty} m^{-k}\mathrm{dim} H^0(X,\lfloor mD\rfloor)>0 \}$$
	If $D$ is not effective, define the invariant Iitaka dimension to be
	$$\kappa_\iota(X,D)=\kappa(X,E)$$
	if there is an $\mathbb{R}-$divisor $E\geq 0$ such that $E\sim_{\mathbb{R}} D$, otherwise define $\kappa_\iota(X,D)=-\infty$. It is easy to see that $\kappa_\iota(X,D)$ does not depend on the choice of $E$.
	
	let $A$ be an ample divisor on X, set
	$$\nu(X,D,A)=\mathrm{max}\{ k\in \mathbb{Z}_{\geq 0}\ |\ \limsup_{m\rightarrow \infty} m^{-k}\mathrm{dim} H^0(X,\lfloor mD\rfloor +A)>0 \}$$
	if $H^0(X,\lfloor mD\rfloor +A)\neq 0$ for infinitely many $m\in \mathbb{N}$ or $\nu(X,D,A)=-\infty$ otherwise. 
	
	Define the numerical Iitaka dimension to be
	$$\nu(X,D)=\max_{A\ \mathrm{ample}} \nu(X,D,A)$$
\end{defn}

We can also define the relative Iitaka dimension.

Let $X\rightarrow U$ be a projective morphism of normal varieties, and let $D$ be an $\mathbb{R}-$Cartier $\mathbb{R}-$divisor on $X$. Then the relative invariant Iitaka dimension of $D$, denoted by $\kappa_\iota(X/U,D)$, is defined: If there is an $\mathbb{R}-$divisor $E\geq 0$ such that $D\sim_{\mathbb{R},U} E$, set $\kappa_\iota(X/U,D)=\kappa_\iota(F,D|_F)$, where $F$ is a general fiber of the Stein factorization of $X\rightarrow U$, and otherwise we set $\kappa_\iota(X/U,D)=-\infty$. Similarly, we define the relative numerical Iitaka dimension to be $\nu(X/U,D)=\nu(F,D|_F)$. When there is $E\geq 0$ such that $D\sim_{\mathbb{R},U} E$, it is easy to see that $\kappa_\iota(X/U,D)$ and $\nu(X/U,D)$ do not depend on the choice of $E$ and $F$.
\begin{prop}
	\label{prop}
	Let $X\rightarrow U$ be a projective morphism of normal varieties and $D$ be a pseudo-effective $\mathbb{R}-$divisor over $U$.
	\begin{enumerate}
		\item[(1).] If $D_1\geq 0,D_2\geq 0$ are two $\mathbb{R}-$divisors, then
		$$\nu(X/U,D_1+D_2)\geq \mathrm{max}\{\nu(X/U,D_1),\nu(X/U,D_2)\}.$$
		\item[(2).] If $D\geq 0$ is an $\mathbb{R}-$divisor, then $\nu(X/U,D)\geq \kappa(X/U,D)$.
		\item[(3).] If $D'$ is an $\mathbb{R}-$divisor with $D'-D$ being pseudo-effective, then $\nu(X,D')\geq \nu(X,D).$
		\item[(4).] Suppose that $D_1\sim_{\mathbb{R},U} N_1$ and $D_2\sim_{\mathbb{R},U} N_2$ for some $\mathbb{R}-$divisors $N_1\geq 0$ and $N_2\geq 0$ such that $\mathrm{Supp}\ N_1\subseteq\mathrm{Supp}\ N_2$. Then we have $\kappa_\iota(X/U,D_1)\leq\kappa_\iota (X/U,D_2)$ and $\nu(X/U,D_1)\leq\nu(X/U,D_2)$.
	\end{enumerate}
	\begin{proof}
		(1), (2) are obvious, (3) comes from \cite[Proposition 5.2.7]{ZDA}, (4) comes from \cite[Remark 2.8]{hashizume2019minimal}.
	\end{proof}
\end{prop}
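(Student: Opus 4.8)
The plan is to treat parts (1), (2) and (4) by elementary comparisons of spaces of global sections, after reducing them to absolute statements on a general fibre, and to quote the literature for part (3). For a general fibre $F$ of the Stein factorisation of $X\to U$ one has, by definition of the relative invariants, $\nu(X/U,-)=\nu(F,-|_F)$, $\kappa(X/U,-)=\kappa(F,-|_F)$ and $\kappa_\iota(X/U,-)=\kappa_\iota(F,-|_F)$; moreover restriction to such an $F$ carries effective divisors to effective divisors and preserves $\mathbb{R}$-linear equivalence. Hence for (1) and (2) it suffices to prove the stated inequalities on the projective normal variety $F$ with $D_i$ and $D$ replaced by their effective restrictions, and for (4), since $D_i\sim_{\mathbb{R},U}N_i$ forces $D_i|_F\sim_{\mathbb{R}}N_i|_F$, it suffices to compare $\kappa_\iota(F,N_1|_F)$ with $\kappa_\iota(F,N_2|_F)$, and likewise for $\nu$.

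For (1): since $D_2|_F\ge 0$, we have $\lfloor m(D_1+D_2)|_F\rfloor\ge\lfloor mD_1|_F\rfloor$ for every $m$ (because $\lfloor a+b\rfloor\ge\lfloor a\rfloor+\lfloor b\rfloor$ and $\lfloor mD_2|_F\rfloor\ge 0$), so for every ample divisor $A$ on $F$ the tautological inclusion of sheaves gives $h^0(\lfloor m(D_1+D_2)|_F\rfloor+A)\ge h^0(\lfloor mD_1|_F\rfloor+A)$; thus $\nu(F,(D_1+D_2)|_F,A)\ge\nu(F,D_1|_F,A)$, and taking the maximum over ample $A$ yields $\nu(F,(D_1+D_2)|_F)\ge\nu(F,D_1|_F)$, and symmetrically for $D_2$. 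For (2): I would choose an ample Cartier divisor $A$ on $F$ linearly equivalent to an effective divisor $A'\ge 0$ (a general member of a very ample linear system); then $\lfloor mD|_F\rfloor\le\lfloor mD|_F\rfloor+A'$ gives $h^0(\lfloor mD|_F\rfloor)\le h^0(\lfloor mD|_F\rfloor+A')=h^0(\lfloor mD|_F\rfloor+A)$ for all $m$, whence $\kappa(F,D|_F)\le\nu(F,D|_F,A)\le\nu(F,D|_F)$.

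For (3), which is already an absolute statement, I would invoke \cite[Proposition 5.2.7]{ZDA}: writing $P:=D'-D$, pseudo-effectivity means that $P+A$ is big for every ample $A$, hence $\mathbb{R}$-linearly an ample divisor plus an effective divisor by Kodaira's lemma; feeding this into the asymptotic estimates for $h^0$ of the relevant ample twists of $mD'$ versus $mD$ yields $\nu(X,D')\ge\nu(X,D)$, the only delicate point being the bookkeeping with floors of $\mathbb{R}$-divisors. For (4): since $\mathrm{Supp}\,N_1\subseteq\mathrm{Supp}\,N_2$ and both divisors have finitely many components, there is a real number $t>0$ with $N_1\le tN_2$; restricting to $F$ we get $N_1|_F\le tN_2|_F$ with $tN_2|_F-N_1|_F\ge 0$. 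Then monotonicity of $h^0$ (for $\kappa_\iota$) and part (3) applied on $F$ (for $\nu$), combined with the invariance of $\kappa_\iota$ and of $\nu$ under multiplication of the divisor by a positive real number, give $\kappa_\iota(F,N_1|_F)\le\kappa_\iota(F,tN_2|_F)=\kappa_\iota(F,N_2|_F)$ and $\nu(F,N_1|_F)\le\nu(F,tN_2|_F)=\nu(F,N_2|_F)$; alternatively one quotes \cite[Remark 2.8]{hashizume2019minimal} directly.

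The only genuinely non-elementary ingredient is part (3): parts (1), (2) and (4) amount to inclusions of linear systems on a fibre, whereas $\nu(X,D')\ge\nu(X,D)$ for $D'-D$ merely pseudo-effective rests on the asymptotic estimates underpinning the theory of the numerical dimension for $\mathbb{R}$-divisors, so there I would not attempt a self-contained proof but cite Nakayama's book. A small auxiliary point, used in the last displayed chain of (4), is that the numerical Iitaka dimension $\nu$, defined here through ample twists, is invariant under scaling the divisor by a positive real constant; this is immediate from the definition.
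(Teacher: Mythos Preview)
Your proposal is correct and aligns with the paper's own proof, which is extremely terse: the paper simply declares (1) and (2) ``obvious'', cites \cite[Proposition~5.2.7]{ZDA} for (3), and cites \cite[Remark~2.8]{hashizume2019minimal} for (4). You have in effect unpacked the ``obvious'' steps via inclusions of linear systems on a general fibre, cited the same source for (3), and for (4) both given the underlying argument (find $t>0$ with $N_1\le tN_2$, use monotonicity and scaling invariance of $\nu$ and $\kappa_\iota$) and offered the same citation as the paper; the scaling-invariance step for $\nu$ is indeed routine once one restricts to effective representatives, though calling it ``immediate'' is slightly generous.
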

\begin{lemma}
	\label{vandk}
	Let $(X,\Delta)$ be a klt pair with $\mathbb{R}-$boundary. Then $(X,\Delta)$ has a good minimal model if and only if $\kappa_\iota(X,K_X+\Delta)=\nu(X,K_X+\Delta)$.
	\begin{proof}
		This is \cite[Lemma 2.13]{hashizume2019minimal}.
	\end{proof}
\end{lemma}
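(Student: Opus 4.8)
\medskip
\noindent\textbf{Proof proposal.} The ``only if'' direction is the easy one, and the plan is to dispose of it first. Suppose $(X,\Delta)$ has a good minimal model $\phi\colon X\dashrightarrow Y$, so that $K_Y+\phi_*\Delta$ is semiample. For a semiample $\mathbb{R}$-divisor the invariant Iitaka dimension and the numerical dimension coincide, both being the dimension of the image of the associated fibration, hence $\kappa_\iota(Y,K_Y+\phi_*\Delta)=\nu(Y,K_Y+\phi_*\Delta)$. It then remains to transport this equality back along $\phi$. On a common resolution $p\colon W\to X$, $q\colon W\to Y$ one has $p^*(K_X+\Delta)=q^*(K_Y+\phi_*\Delta)+E$ with $E\ge 0$ and $q$-exceptional (by the definition of a minimal model and the negativity lemma). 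Adding a $q$-exceptional effective divisor changes neither $\kappa_\iota$ (it is absorbed upon pushforward by $q$) nor $\nu$ (since $q^*(K_Y+\phi_*\Delta)$ is nef and $E$ lies in its relative negative part), so $\kappa_\iota(X,K_X+\Delta)=\kappa_\iota(Y,K_Y+\phi_*\Delta)=\nu(Y,K_Y+\phi_*\Delta)=\nu(X,K_X+\Delta)$.

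For the ``if'' direction, assume $\kappa_\iota(X,K_X+\Delta)=\nu(X,K_X+\Delta)=:k$. If $k=-\infty$ then $K_X+\Delta$ is not pseudo-effective and there is nothing to prove, so I assume $k\ge 0$ and fix an effective $\mathbb{R}$-divisor $D\sim_{\mathbb{R}}K_X+\Delta$ with $\kappa(X,D)=k$. Replacing $(X,\Delta)$ by a log resolution --- which preserves klt-ness, both invariants, and the existence or non-existence of a good minimal model, by the standard crepant-birational arguments --- I may assume the Iitaka fibration $g\colon X\to Y$ of $D$ is a morphism onto a smooth $k$-dimensional $Y$. For a general fibre $F$, the pair $(F,\Delta|_F)$ is klt, $K_F+\Delta|_F\sim_{\mathbb{R}}D|_F$ by adjunction, and $\kappa_\iota(F,K_F+\Delta|_F)=\kappa(F,D|_F)=0$ by the defining property of the Iitaka fibration.

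The heart of the matter is the claim that $\nu(F,K_F+\Delta|_F)=0$ as well. For all sufficiently divisible $m$ the base-point-free system $|\lfloor mD\rfloor|$ realizes $g$, so $\mathrm{Mob}(|\lfloor mD\rfloor|)=g^*A_m$ for some ample $A_m$ on $Y$ and $\lfloor mD\rfloor=g^*A_m+\mathrm{Fix}(|\lfloor mD\rfloor|)$; restricting to $F$ annihilates the pulled-back summand, so $\lfloor mD\rfloor|_F=\mathrm{Fix}(|\lfloor mD\rfloor|)|_F$. Since $g^*A_m$ is nef with $(g^*A_m)^{\dim Y}\not\equiv 0$, its numerical dimension is $\dim Y=k$. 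Now I would run a section count: by the projection formula, $h^0(X,\lfloor mD\rfloor+H)=h^0\bigl(Y,\,g_*\mathcal{O}_X(\mathrm{Fix}(|\lfloor mD\rfloor|)+H)\otimes\mathcal{O}_Y(A_m)\bigr)$ for an auxiliary ample $H$ on $X$, the pushforward sheaf on $Y$ is torsion-free of generic rank $\rho_m:=h^0(F,\lfloor mD\rfloor|_F+H|_F)$, and $h^0(Y,\mathcal{O}_Y(A_m))=h^0(X,\lfloor mD\rfloor)$ grows like $m^{\dim Y}$ because $\kappa(X,D)=\dim Y$; so the right-hand side grows at least like a positive constant times $\rho_m\,m^{\dim Y}$. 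Comparing with the bound $h^0(X,\lfloor mD\rfloor+H)=O(m^{\dim Y})$ coming from $\nu(X,D)=k=\dim Y$, the numbers $\rho_m$ must stay bounded, which gives $\nu(F,D|_F)=0$. The same conclusion can be packaged via Nakayama's $\sigma$-decomposition: one writes $P_\sigma(D)\equiv g^*\eta$ for a big nef class $\eta$ on $Y$ (the limit of the normalized mobile parts, which are pulled back from $Y$), notes $\kappa_\sigma(X,N_\sigma(D))=0$, and runs the identical comparison on $N_\sigma(D)|_F\equiv D|_F$.

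Granting the claim, $(F,\Delta|_F)$ is a klt pair with $\kappa_\iota(F,K_F+\Delta|_F)=\nu(F,K_F+\Delta|_F)=0$, hence it has a good minimal model by the known numerical-log-Kodaira-dimension-zero case (Gongyo). Since the general fibre of the Iitaka fibration of $K_X+\Delta$ then admits a good minimal model and $\kappa(X,K_X+\Delta)\ge 0$, I would conclude by Lai's theorem on varieties fibred by good minimal models (in its form for $\mathbb{R}$-boundaries) that $(X,\Delta)$ itself has a good minimal model. The step I expect to be the main obstacle is exactly the claim on the general fibre: making the section count rigorous means controlling the rounding $\lfloor mD\rfloor$ for the $\mathbb{R}$-divisor $D$, the $m$-dependence of $A_m$ and of the fixed parts, and the cohomology-and-base-change statement identifying the generic rank of $g_*\mathcal{O}_X(\mathrm{Fix}+H)$ with a fibrewise $h^0$; granting that, the rest is assembly from crepant-birational invariance and the two quoted theorems.
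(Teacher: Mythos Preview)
The paper does not prove this lemma; it simply cites \cite[Lemma~2.13]{hashizume2019minimal}. So there is no ``paper's approach'' to compare against, and I assess your direct argument on its own.

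The ``only if'' direction is fine. For the ``if'' direction your architecture---restrict to the general fibre of the Iitaka fibration, invoke the $\nu=0$ abundance case (Gongyo), then globalize via Lai---is indeed the standard route for the $\mathbb{Q}$-boundary case (cf.\ \cite{gongyo_lehmann_2013}, \cite{Lai2010}). But the step you yourself flag is a genuine gap, and neither of your two sketches closes it. In the section count, the lower bound $h^0\bigl(Y,\mathcal{O}_Y(A_m)\otimes\mathcal{F}_m\bigr)\gtrsim\rho_m\,m^{\dim Y}$ does not follow from $\operatorname{rank}\mathcal{F}_m=\rho_m$ alone: both $A_m$ and $\mathcal{F}_m$ vary with $m$, and without a positivity statement for $\mathcal{F}_m$ (e.g.\ weak positivity of direct images, which would require rewriting $\mathrm{Fix}_m+H$ in adjoint form) the sheaf can absorb the growth coming from $A_m$. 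In the $\sigma$-decomposition variant, the assertion $P_\sigma(D)\equiv g^*\eta$ is itself the hard input---a theorem on numerical reduction maps in \cite{ZDA} or Lehmann's work---not a formal consequence of the setup; and even granting it, $\nu\bigl(F,N_\sigma(D)|_F\bigr)=0$ does not follow from knowing $\kappa_\sigma\bigl(X,N_\sigma(D)\bigr)=0$ globally.

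There is a second issue you pass over: Lai's theorem \cite{Lai2010} is stated for $\mathbb{Q}$-boundaries, and extending the ``fibred by good minimal models'' argument to $\mathbb{R}$-boundaries is nontrivial and is part of what \cite{hashizume2019minimal} actually supplies. So even after the fibre claim is settled, the globalization step in the $\mathbb{R}$-case still needs its own justification.
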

\begin{lemma}
	\label{good minimal model}
	Let $f:X\rightarrow U$ be a projective morphism, $(X,\Delta)$ a dlt pair with $\mathbb{Q}$ boundary and $\phi:X\dashrightarrow X_M$ and $\phi' :X\dashrightarrow X'_M$ be minimal models for $K_X+\Delta$ over $U$. Then
	\begin{enumerate}
		\item the set of $\phi-$exceptional divisors coincides with the set of divisors contained in $\mathbf{B}_-(K_X+\Delta/ U)$ and if $\phi$ is a good minimal model for $K_X+\Delta$ over $U$, then this set also coincides with the set of divisors contained in $ \mathbf{B}(K_X+\Delta / U)$.
		\item $X'_M\dashrightarrow X_M$ is an isomorphism in codimension 1 such that $a(E;X_M,\phi_*\Delta)=a(E;X'_M,\phi'_*\Delta)$ for any divisor $E$ over $X$, and
		\item if $\phi $ is a good minimal model of $K_X+\Delta$ over $U$, then so is $\phi '$.
		
	\end{enumerate}
	\begin{proof}
		This is \cite[Lemma 2.4]{HMX}.
	\end{proof}
\end{lemma}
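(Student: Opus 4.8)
The plan is to reduce all three statements to two standard tools: the negativity lemma, and the description of the relative Nakayama--Zariski ($\sigma$-)decomposition of an adjoint divisor in terms of discrepancies once a relatively nef model is available. Fix a common log resolution $p\colon W\to X$, $q\colon W\to X_M$, $q'\colon W\to X'_M$ over $U$, and set $N_M:=K_{X_M}+\phi_*\Delta$ and $N'_M:=K_{X'_M}+\phi'_*\Delta$, both nef over $U$. The discrepancy inequality in the definition of a minimal model gives $p^*(K_X+\Delta)=q^*N_M+E_M=q'^*N'_M+E'_M$ with $E_M,E'_M\ge 0$ on $W$, where $\mathrm{mult}_F E_M=a(F;X_M,\phi_*\Delta)-a(F;X,\Delta)$ and similarly for $E'_M$; thus $E_M$ is exceptional over $X_M$ and $E'_M$ over $X'_M$, and the non-$p$-exceptional components of $\mathrm{Supp}\,E_M$ are exactly the strict transforms of the $\phi$-exceptional divisors of $X$, by the \emph{strict} part of the discrepancy inequality.

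For (2), put $\Psi:=q^*N_M-q'^*N'_M=E'_M-E_M$. Since $N'_M$ is nef over $U$ it is nef over $X_M$, and $q^*N_M$ is numerically trivial over $X_M$, so $-\Psi$ is nef over $X_M$; moreover $q_*\Psi=N_M-q_*(q'^*N'_M)\ge 0$, because $a(\,\cdot\,;X,\Delta)\le a(\,\cdot\,;X'_M,\phi'_*\Delta)$ forces $q_*(q'^*N'_M)\le N_M$. By the negativity lemma $\Psi\ge 0$, and by the symmetric argument applied to $q'$ one gets $-\Psi\ge 0$, hence $\Psi=0$; comparing coefficients on prime divisors of $W$ yields $a(E;X_M,\phi_*\Delta)=a(E;X'_M,\phi'_*\Delta)$ for every divisor $E$ over $X$. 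If some prime divisor $P'$ on $X'_M$ were contracted by $X'_M\dashrightarrow X_M$, its strict transform $P'_W$ in $W$ would be the strict transform of a $\phi$-exceptional divisor of $X$, hence exceptional over $X_M$ but not over $X'_M$; then $\mathrm{mult}_{P'_W}E_M>0$ while $\mathrm{mult}_{P'_W}E'_M=0$, contradicting $E_M=E'_M$. So $X'_M\dashrightarrow X_M$, and symmetrically its inverse, contracts no divisor, i.e.\ it is an isomorphism in codimension one.

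For (1), the key input is the standard identity $\mathrm{mult}_D\|K_X+\Delta/U\|=a(D;X_M,\phi_*\Delta)-a(D;X,\Delta)$ for every prime divisor $D$ on $X$, valid because $K_{X_M}+\phi_*\Delta$ is nef over $U$; by the minimal-model inequality the right-hand side is positive precisely when $D$ is $\phi$-exceptional, which identifies the divisorial part of $\mathbf{B}_-(K_X+\Delta/U)$ with the set of $\phi$-exceptional divisors of $X$. If $\phi$ is moreover good, then $N_M$ is semiample over $U$, so $\mathbf{B}(q^*N_M/U)=\emptyset$ and hence $\mathbf{B}(p^*(K_X+\Delta)/U)\subseteq\mathrm{Supp}\,E_M$ (we are adding the effective divisor $E_M$ to the pullback of a relatively base-point-free divisor); pushing forward by $p$, which only forgets $p$-exceptional divisors, and using $\mathbf{B}_-\subseteq\mathbf{B}$, the divisorial part of $\mathbf{B}(K_X+\Delta/U)$ is again the set of $\phi$-exceptional divisors. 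For (3), assume $\phi$ good, so $N_M$ is semiample over $U$; by (2), $q'^*N'_M=q^*N_M$ is semiample over $U$, and the morphism $g\colon W\to T$ over $U$ defined by a sufficiently divisible multiple of $q'^*N'_M$ contracts every fibre of $q'$, so by the rigidity lemma $g=h\circ q'$ for a morphism $h\colon X'_M\to T$ over $U$; then $N'_M=q'_*q'^*N'_M=h^*A$ for some $A$ ample over $U$, so $N'_M$ is semiample over $U$ and $\phi'$ is a good minimal model.

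The step I expect to be the genuine obstacle is the base-locus bookkeeping in (1): one must invoke the relative form of Nakayama's $\sigma$-decomposition to translate the diminished base locus into discrepancies, and, crucially, to upgrade from $\mathbf{B}_-$ (where nefness of $N_M$ suffices) to $\mathbf{B}$ (where semiampleness over $U$ is genuinely used, which is where the $\mathbb{Q}$-boundary hypothesis is spent), while carefully separating $p$-exceptional from $\phi$-exceptional divisors under the push-forward $W\to X$. Since the pair is only dlt, exceptional divisors need not be terminal, so the identification of $\mathrm{Supp}\,E_M$ rests on the strictness built into the definition of a minimal model rather than on any positivity of discrepancies; everything else is routine manipulation around the negativity lemma.
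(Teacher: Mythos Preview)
Your argument is essentially correct and follows the standard route: the negativity lemma for part (2), the relative Nakayama $\sigma$-decomposition for part (1), and pulling back semiampleness through the common resolution for part (3). One minor point: in (2) your justification of $q_*\Psi\ge 0$ via discrepancy inequalities is unnecessarily indirect; since $E_M$ is $q$-exceptional one has $q_*\Psi=q_*E'_M\ge 0$ immediately, and the symmetric statement $q'_*(-\Psi)=q'_*E_M\ge 0$ then gives $\Psi=0$ by two applications of negativity. In (1) the precise identity $\mathrm{mult}_D\lVert K_X+\Delta/U\rVert=a(D;X_M,\phi_*\Delta)-a(D;X,\Delta)$ does require the input you flag (that $q^*N_M+E_M$ is the relative Nakayama--Zariski decomposition of $p^*(K_X+\Delta)$, which uses nefness of $N_M$ and $q$-exceptionality of $E_M$); this is indeed the substantive step, and you have identified it correctly.

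As for comparison with the paper: there is nothing to compare. The paper does not prove this lemma at all; its entire proof is the one-line citation ``This is \cite[Lemma 2.4]{HMX}.'' Your write-up is, in outline, the argument one finds in that reference (and in closely related places such as Birkar's papers on existence of log minimal models), so you have effectively reconstructed the cited proof rather than offered an alternative.
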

\begin{thm}
\label{thm3}
	Let $X$ be a projective normal variety. If $(X,\Delta)$ is a klt pair with $\mathbb{Q}-$boundary and $\kappa(X,\Delta)\geq 0$, then the ample model of $(X,\Delta)$ exists.
	\begin{proof}
		This is \cite[Corollary 1.1.2]{BCHM}.
	\end{proof}
\end{thm}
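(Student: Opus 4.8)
The plan is to prove the statement by identifying the ample model with the $\mathrm{Proj}$ of the log canonical ring, so that everything reduces to a finite generation statement. Concretely, fix $m_0$ divisible enough that $m_0(K_X+\Delta)$ is a Cartier divisor and consider the section ring $R:=\bigoplus_{m\geq 0}H^0\big(X,\mathcal{O}_X(m\cdot m_0(K_X+\Delta))\big)$. Since $\kappa(X,K_X+\Delta)\geq 0$, this ring is nonzero in positive degrees, and once it is known to be finitely generated the associated rational map $\phi:X\dashrightarrow Z:=\mathrm{Proj}\,R$ is a rational contraction and is precisely the ample model of $K_X+\Delta$; this is the standard description of the ample model in the finitely generated case (see \cite{BCHM}). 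So the whole content is the finite generation of $R$.

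When $K_X+\Delta$ is already big I would get this from \cite{BCHM} directly: writing $K_X+\Delta\sim_{\mathbb{Q}}A+E$ with $A$ ample and $E\geq 0$, for $0<t\ll 1$ one has $(1+t)(K_X+\Delta)\sim_{\mathbb{Q}}K_X+(\Delta+tE+A')$ with $A'$ a general effective ample $\mathbb{Q}$-divisor, so that $(X,\Delta+tE+A')$ is klt with big boundary; by \cite{BCHM} it has a log terminal model, on which the nef and big log canonical divisor is semiample by the basepoint-free theorem, so its section ring — hence $R$ — is finitely generated. For the general case $0\leq\kappa(X,K_X+\Delta)=\kappa<n$ I would reduce to the big case via the Iitaka fibration and the canonical bundle formula recalled in the Introduction. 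Choose a log resolution $\mu:Y\to X$ through which the Iitaka fibration of $K_X+\Delta$ factors as a morphism $f:Y\to Z$ with $Z$ smooth of dimension $\kappa$; write $K_Y+\Delta_Y=\mu^*(K_X+\Delta)$ for the crepant pullback, so $(Y,\Delta_Y)$ is sub-klt and, after replacing $Y$ by a higher model, $K_Y+\Delta_Y\sim_{\mathbb{Q}}f^*D$ with $(K_Y+\Delta_Y)|_F\sim_{\mathbb{Q}}0$ on a general fiber $F$; thus $f$ is a klt-trivial fibration. Replacing $Z$ by the Ambro model of $f$, the canonical bundle formula gives $K_Y+\Delta_Y\sim_{\mathbb{Q}}f^*(K_Z+B_Z+M_Z)$ with $B_Z\geq 0$, $(Z,B_Z)$ klt and $M_Z$ nef, and $K_Z+B_Z+M_Z$ is big because its Iitaka dimension equals $\kappa=\dim Z$. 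Using $\mu_*\mathcal{O}_Y=\mathcal{O}_X$ and $f_*\mathcal{O}_Y=\mathcal{O}_Z$, the section rings of $m_0(K_X+\Delta)$, of $m_0(K_Y+\Delta_Y)$, and — for a further multiple — of $K_Z+B_Z+M_Z$ all agree, so it is enough to prove finite generation on $Z$.

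To finish, I remove the nef divisor $M_Z$ exactly as in the big case: since $K_Z+B_Z+M_Z$ is big, write $K_Z+B_Z+M_Z\sim_{\mathbb{Q}}A+E$ with $A$ ample, $E\geq 0$, and for $0<t\ll 1$ get $(1+t)(K_Z+B_Z+M_Z)\sim_{\mathbb{Q}}K_Z+B_Z+tE+(M_Z+tA)\sim_{\mathbb{Q}}K_Z+\Theta$, where $\Theta=B_Z+tE+A'$ with $A'\sim_{\mathbb{Q}}M_Z+tA$ a general effective ample $\mathbb{Q}$-divisor and $(Z,\Theta)$ klt with $\Theta$ big; by the big case $R(Z,K_Z+\Theta)$ is finitely generated, hence so is the section ring of $K_Z+B_Z+M_Z$ and therefore $R$. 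Taking $\mathrm{Proj}$ gives the ample model; the case $\kappa=0$ is the degenerate instance with $Z=\mathrm{Spec}\,k$.

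The step I expect to be the main obstacle is the bookkeeping that identifies the section rings on $X$, $Y$ and $Z$: the canonical bundle formula is only a $\mathbb{Q}$-linear equivalence and the boundary and moduli parts are defined only after passing to birational models, so one must choose a sufficiently high and sufficiently divisible model and control the round-downs $\lfloor m(K_X+\Delta)\rfloor$ under $\mu$ and under $f$, making sure no sections are created or lost — which is why it is cleanest to argue with divisible Veronese subrings as above. In fact all of this — the Iitaka reduction together with the big case of \cite{BCHM} — is exactly what is packaged in \cite[Corollary~1.1.2]{BCHM}, so in the write-up one may simply quote that; the only thing then left to check is the routine fact that $\mathrm{Proj}$ of a finitely generated log canonical ring is a rational contraction realizing the ample model.
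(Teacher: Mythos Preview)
Your proposal lands exactly where the paper does: the statement is \cite[Corollary~1.1.2]{BCHM}, and the paper's entire proof is that single citation.

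One slip in your sketch is worth flagging, since you may want to reuse this outline elsewhere. The Iitaka fibration only gives $\kappa\bigl(F,(K_Y+\Delta_Y)|_F\bigr)=0$ on a general fibre, not $(K_Y+\Delta_Y)|_F\sim_{\mathbb{Q}}0$, and no birational modification of $Y$ converts the former into the latter; so the line ``after replacing $Y$ by a higher model, $K_Y+\Delta_Y\sim_{\mathbb{Q}}f^*D$'' is not available, and $f$ is not a klt-trivial fibration for the pair $(Y,\Delta_Y)$ as written. The actual Fujino--Mori step (and precisely the construction the paper itself carries out at the start of Section~3.1 and in Remark~\ref{piecewise linear}) is to choose an effective $L\sim_{\mathbb{Q}}K_Y+\Delta_Y$, decompose $L=f^*D+F_0$ with $F_0\geq 0$ not containing any full fibre over a prime divisor so that $f_*\mathcal{O}_Y(\lfloor mF_0\rfloor)\cong\mathcal{O}_Z$ for all $m\geq 0$, and then apply the canonical bundle formula to the lc-trivial fibration $(Y,\Delta_Y-F_0)\to Z$; the section rings then match because $F_0$ contributes nothing under $f_*$. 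Since you explicitly identify this bookkeeping as the obstacle and defer to the packaged statement in \cite{BCHM}, the slip is harmless for the present proof, but the sketch as stated would not go through.
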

\section{Proof of Main Theore}
In this section we prove the relative version of Theorem \ref{main} and Theorem \ref{main2}.

\begin{lemma}
	\label{thm2}
	Let $X\rightarrow U$ be a projective morphism of normal varieties, fix an integer $0\leq k\leq n$. Let $V$ be a finite dimensional affine subspace of the vector space $\mathrm{WDiv}_{\mathbb{R}}(X)$ of Weil divisors on $X$ which is defined over the rationals. Suppose $L$ is a convex subset, such that for any $\mathbb{Q}-$divisor $D\in L$, $\kappa(X/U,D)=k$ and the ample model of $D$ over $U$ exists. Suppose $f_D:X\dashrightarrow Z_D$ is the ample model of $D$ over $U$. Then the generic fiber of $f_D$ are the same for every $D\in L$, in particular, $Z_D$ are birational equivalent for every $D\in L$.
	\begin{proof}
		Choose two $\mathbb{Q}-$divisors $D_1,D_2\in L$. After replaceing $X$ by a higher model, and $D_i$ by its pull back, we may assume that $X\rightarrow Z_{D_i}$ is a morphism. Then we have the diagram
		$$\xymatrix{
			&X \ar[dl] _{f_1} \ar[dr]^{f_2} \ar[d]_{h}&  \\
			Z_{D_1}	& Z_{D_1} \times Z_{D_2} \ar[l]^{p_1} \ar[r]_{p_2} & 	 Z_{D_2}
		} $$
		where $p_1$ and $p_2$ are two projections from the fiber product $Z_{D_1} \times Z_{D_2}$. Since $Z_{D_i}$ is the ample model of $D_i$ over $U$, there exists an ample divisor $A_i$ on $Z_{D_i}$, such that $$D_i\sim_{\mathbb{Q},U} f_i ^* A_i +E_i,$$ where $E_i$ is effective, $i=1,2$.
		
		By the definition of fiber product, $A:=\frac{1}{2}(p_1 ^* A_1+p_2^* A_2)$ is an ample divisor on $Z_{D_1} \times Z_{D_2}$. Let $Z$ be the normalization of image of $X$ in $Z_{D_1}\times Z_{D_2}$. We have $A|_Z$ is ample on $Z$, in particular, 
		$$\kappa(Z/U,A|_Z)=\mathrm{dim}(Z)-\mathrm{dim}(U)\geq \mathrm{dim}(Z_{D_i})-\mathrm{dim}(U)=\kappa(X/U,D_i)=k,$$ 
		Because 
		$$\frac{1}{2}(D_1+D_2)\sim_{\mathbb{Q},U} h^* A|_Z+\frac{1}{2}(E_1+E_2),$$ 
		and $\frac{1}{2}(D_1+D_2)\in L$, we have that 
		$$k=\kappa(X/U,\frac{1}{2}(D_1+D_2))\geq \kappa(X/U,h^*A|_Z)=\kappa(Z/U,A|_Z)\geq k,$$
		which means $\mathrm{dim}\ Z=\mathrm{dim}\ Z_{D_i}$. Because all $f_i$ are algebraic contractions, it is easy to see that the morphisms $p_i:Z\rightarrow Z_i$ are birational. 
	\end{proof}
\end{lemma}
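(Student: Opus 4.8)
The plan is to compare the ample models of any two rational divisors in $L$ via a fiber product, exploiting the defining property that an ample model is, up to $\mathbb{R}$-linear equivalence over $U$, the pull-back of an ample divisor plus an effective error term. Since $L$ is convex and birational equivalence is transitive, it suffices to fix two $\mathbb{Q}$-divisors $D_1,D_2\in L$ and show that $Z_{D_1}$ and $Z_{D_2}$ are birational over $U$ compatibly with $f_1$ and $f_2$. After replacing $X$ by a common higher birational model and each $D_i$ by its total transform --- which alters neither $\kappa(X/U,D_i)$ nor the ample model --- I may assume $f_i:X\to Z_{D_i}$ are morphisms, and I may write $D_i\sim_{\mathbb{Q},U} f_i^*A_i+E_i$ with $A_i$ ample on $Z_{D_i}$, $E_i\ge 0$, and $\dim Z_{D_i}-\dim U=\kappa(X/U,D_i)=k$. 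Let $Z$ be the normalization of the image of $X$ under $(f_1,f_2):X\to Z_{D_1}\times_U Z_{D_2}$, let $h:X\to Z$ be the induced morphism (which has connected fibers since $Z$ is normal), and let $p_i:Z\to Z_{D_i}$ be the projections, so that $f_i=p_i\circ h$ and each $p_i$ is surjective.

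The key point is a dimension count. Because $\mathrm{pr}_1^*A_1+\mathrm{pr}_2^*A_2$ is relatively ample over $U$ on $Z_{D_1}\times_U Z_{D_2}$, its restriction $A:=\frac{1}{2}(p_1^*A_1+p_2^*A_2)$ to $Z$ is ample, so $\kappa(Z/U,A)=\dim Z-\dim U$. Averaging the pull-backs of the two relations above gives $\frac{1}{2}(D_1+D_2)\sim_{\mathbb{Q},U} h^*A+\frac{1}{2}(E_1+E_2)$ with $\frac{1}{2}(E_1+E_2)\ge 0$, and since $\frac{1}{2}(D_1+D_2)\in L$ and $h$ is a contraction, monotonicity of the relative Iitaka dimension yields
$$k=\kappa\!\left(X/U,\ \tfrac{1}{2}(D_1+D_2)\right)\ \ge\ \kappa(X/U,\, h^*A)\ =\ \kappa(Z/U,A)\ =\ \dim Z-\dim U .$$
As $p_i$ is surjective we also have $\dim Z\ge \dim Z_{D_i}=\dim U+k$, so $\dim Z=\dim Z_{D_i}$ and each $p_i:Z\to Z_{D_i}$ is a surjective generically finite morphism; its general fiber is the image under $h$ of a general fiber of $f_i$, hence connected, so $p_i$ has degree one and is birational. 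Therefore $p_2\circ p_1^{-1}:Z_{D_1}\dashrightarrow Z_{D_2}$ is birational with $f_2=(p_2\circ p_1^{-1})\circ f_1$, which shows that $Z_{D_1}$ and $Z_{D_2}$ are birational and that $f_1$ and $f_2$ have the same general fiber in $X$.

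The step I expect to require the most care is the control of the relative Iitaka dimension through the contraction $h$ and the normalized image $Z$ --- in particular the equalities $\kappa(Z/U,A)=\dim Z-\dim U$ for ample $A$ and $\kappa(X/U,h^*A)=\kappa(Z/U,A)$, together with the verification that replacing $X$ by a higher model and the $D_i$ by their total transforms disturbs neither the ample models nor the relative Iitaka dimensions. The remaining ingredients --- convexity of $L$, ampleness on the fiber product of a positive combination of pulled-back ample divisors, and the passage from ``birational and both contractions'' to ``identical general fiber'' --- are routine.
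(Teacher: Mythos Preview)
Your argument is correct and follows essentially the same route as the paper: pass to a common resolution, map to the (fiber) product $Z_{D_1}\times_U Z_{D_2}$, use the ample combination $\tfrac12(p_1^*A_1+p_2^*A_2)$ together with convexity of $L$ and the constant relative Iitaka dimension to force $\dim Z=\dim Z_{D_i}$, and conclude that the projections are birational. Your write-up is in fact slightly more careful than the paper's (you take the fiber product over $U$ and spell out why the degree-one conclusion follows from connectedness of the fibers of $f_i$), but there is no substantive difference in strategy.
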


\subsection{Finiteness of Ample Models}
	
	Let $h:X\rightarrow Y$ be an equidimensional algebraic fibration over $U$ such that $Y$ is smooth. Suppose $(X,\Delta)$ is a klt pair with $\mathbb{Q}-$boundary,  $\kappa(X/U,K_X+\Delta)=k\geq 0$, and the restriction $h_\eta:X_\eta\rightarrow Y_\eta$ over the generic point $\eta$ is birational to the Iitaka fibration of $K_X+\Delta$ over $U$. First we show how to get a pair $(Y,C)$ of log general type from $(X,\Delta)$.
	
	Since $\kappa(X/U,K_X+\Delta)=k\geq 0$, there is a $\mathbb{Q}-$effective divisor $L$ such that 
	$$K_X+\Delta\sim_{\mathbb{Q},U} L.$$
	We put $D := \mathrm{max}\{N\ |\ N$ is an effective $\mathbb{Q}$-divisor on $Y$ such that $L \geq h^* N \}$ and $F:=L-h^*D$. Then we have
	$$K_X+\Delta\sim_{\mathbb{Q},U} h^*D+F$$
	By definition, $\kappa(X_\eta,(K_X+\Delta)|_\eta)=0$, so $f_*\mathcal{O}_X(\lfloor iF\rfloor)$ is a reflexive sheaf of rank 1 on $Y$. Moreover, since $Y$ is smooth, $h_*\mathcal{O}_X(\lfloor iF\rfloor)$ is an invertible sheaf on $Y$. By construction, $\mathrm{Supp}(F)$ does not contain the whole fiber of any prime divisor on $Y$, therefore we have $\mathcal{O}_Y\cong h_*\mathcal{O}_X(\lfloor iF\rfloor)$. Moreover, it is easy to see that $D$ and $F$ are both $\mathbb{Q}-$divisors.
	\begin{rmk}
		\label{piecewise linear}
		We will show how $L$ and $F$ vary depending on $\Delta$. Define
		$$\mathcal{D}=\{(a_1,a_2,...,a_m)\in [0,1] ^{\times m}\ | \sum_{i=1}^{m}a_i=1\}$$
		
		Let $h:X\rightarrow Y$ be an equidimensional algebraic fibration and $Y$ is smooth, let $X_\eta$ denote the generic fiber of $h$. Suppose $\{L_i,1\leq i\leq m\}$ are $m$ linearly independent effective $\mathbb{Q}-$Cartier $\mathbb{Q}-$divisors, such that for every $(a_1,a_2,...,a_m)\in \mathcal{D}$, we have $\kappa(X_\eta, \sum_{i=1}^{m}a_iL_i)=0$. Define 
		$$D(a_1,...,a_m) := \mathrm{max}\{N\ |\ N\ \mathrm{is\ an\ effective\ }\mathbb{Q}-\mathrm{divisor\ on\ }Y\ \mathrm{such\ that\ }\sum_{i=1}^{m}a_iL_i \geq f^* N \}.$$ 
		and 
		$$F(a_1,...,a_m):=\sum_{i=1}^{m}a_iL_i-f^*D(a_1,...,a_m).$$
		
		Next we show that $D(a_1,...,a_m)$ is a piecewise $\mathbb{Q}-$linear function on $\mathcal{D}$. Let $P$ be a prime divisor on $Y$, suppose $h^*P=\sum_{j=1}^{l}b_j G_j$, where $G_j,1\leq j\leq l$ are prime divisors. Then the coefficient of $P$ in $D(a_1,...,a_m)$ is 
		$$\mathrm{coeff}_P D(a_1,...,a_m)= \mathrm{min}\{\sum_{i=1}^{m}\frac{a_i}{b_j} \mathrm{coeff}_{G_j} L_i,\ 1\leq j\leq m\}.$$
		It is easy to see that $\mathrm{coeff}_P D(a_1,...,a_m)$ is a piecewise linear function of $(a_1,...,a_m)$. Moreover, because there are only finitely many prime divisors $P$ such that $\mathrm{Supp}\ h^*P\subset \cup_{1\leq i\leq m}\mathrm{Supp}\ L_i$, and $\frac{1}{b_j} \mathrm{coeff}_{G_j}L_i$ are rational numbers for every $j$, we can divide $\mathcal{D}$ into finitely many rational polytopes $\cup_k \mathcal{D}_k$ such that $D(a_1,...,a_m)$ is a $\mathbb{Q}-$linear function in each $\mathcal{D}_k$.
	\end{rmk}
	\begin{thm}
		\label{fibration}
		Let $\pi:X\rightarrow U$ be a projective morphism of normal varieties and $\mathrm{dim}(X)=n$. Let $V$ be a finite dimensional affine subspace of the vector space $\mathrm{WDiv}_{\mathbb{R}}(X)$ which is defined over the rationals. Fix an integer $0\leq k \leq n$. Suppose $L\in \mathcal{L}(V)$ be a closed convex rational polytope, such that for any $\Delta \in L$, $(X,\Delta)$ is klt and $\kappa(X/U,K_X+\Delta)=k$. Then there exists a commutative diagram
		$$\xymatrix{
			X  \ar[d] _{\pi}& &   X' \ar[ll]_{\mu} \ar[d]^{h}\\
			U	&  & 	 Y\ar[ll]^{g}
		} $$
		with the following properties.
		\begin{enumerate}
			\item $\mu$ is birational morphism, $h$ is an equidimensional algebraic fibration, $X'$ has only $\mathbb{Q}-$factorial toroidal singularities and $Y$ is smooth;

			\item There exits a finite dimensional affine subspace $V'$ of the vector space $\mathrm{WDiv}_{\mathbb{R}}(X')$ which is defined over $\mathbb{Q}$, a closed rational polytope $L'\subset \mathcal{L}(V')$ with $\mathrm{totaldiscrep}(X',L')= \mathrm{totaldiscrep}(X,L)$, and a $\mathbb{Q}-$linear isomorphism $*': L\rightarrow L'$. For any divisor $\Delta\in L$, $(X',\mathrm{Supp}(\Delta'))$ is quasi-smooth ($i.e.$, $(X',\mathrm{Supp}(\Delta'))$ is toriodal), and 
			$$\mu _*\mathcal{O}_{X'}(m(K_{X'}+\Delta')) \cong \mathcal{O}_X(m(K_X+\Delta)),\ \ \forall m\in \mathbb{N};$$
			
		\end{enumerate}
		\begin{proof}
			Fix $\Delta_1\in L$, we may choose a birational projective morphism $\mu:X'\rightarrow X$, such that there exists a projective morphism $h:X'\rightarrow Y$ of smooth projective varieties over $U$ and the restriction $h_\eta: X'_\eta\rightarrow Y_\eta$ over the generic point $\eta$ of $U$ is birational to the Iitaka fibration of $K_X+\Delta_1$. By Lemma \ref{thm2}, it is birational to the Iitaka fibration of $K_X+\Delta$, for every $\Delta \in L$. Let $\Delta_0\in L$ be an inner rational point. By the weak semi-stable reduction therorem of Abramovich of Karu (cf. \cite{Abramovich2000}), we can assume that, $h:(X',D')\rightarrow (Y,D_Y)$ is an equidimensional toroidal morphism for some divisors $D'$ on $X'$ and $D_Y$ on $Y$ where $(X',D')$ is quasi-smooth, $Y$ is smooth and $\mu ^{-1}(\Delta_0 \cup \mathrm{Sing}(X)) \subset D'$.  
			
			Let $F$ be the exceptional divisor of $\mu$, denote $a:=\mathrm{totaldiscrep}(X,L)$. For a $\Delta \in L$, define $ \Delta'$ by 
			$$\Delta' :=\mu_* ^{-1}\Delta +\max\{0,-a\} F$$
			Since $\Delta_0$ is an inner point of $L$, it is easy to see that $\mathrm{Supp}(\Delta')\in D'$, clearly $(X',\Delta')$ satisfies (1) and (2).
		\end{proof}
	\end{thm}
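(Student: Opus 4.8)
The plan is to realise $Y$ as a toroidalisation of the relative Iitaka fibration of a single boundary in $L$, and then to carry the whole polytope $L$ across the resulting birational modification by an explicit affine recipe, in such a way that every conclusion is forced by the combinatorics of the finitely many prime divisors occurring in $V$ rather than by the individual boundary.

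First I would fix a rational point $\Delta_1\in L$. Since $\kappa(X/U,K_X+\Delta_1)=k\geq 0$, Theorem \ref{thm3} provides the ample model of $K_X+\Delta_1$ over $U$, and the associated rational map $X\dashrightarrow Z$ is the relative Iitaka fibration of $K_X+\Delta_1$; I would pass to a birational morphism $\mu_0:X_0\to X$ over $U$ from a smooth variety on which it becomes a morphism $h_0:X_0\to Y_0$ of smooth varieties over $U$. By Theorem \ref{thm3} the ample model of $K_X+D$ over $U$ exists for every rational $D\in L$, and Lemma \ref{thm2} shows that all of these ample models have the same generic fibre, so the single fibration $h_0$ serves as an Iitaka-type fibration for every $\Delta\in L$. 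I would then apply the weak semistable reduction theorem of Abramovich--Karu \cite{Abramovich2000} to $h_0:X_0\to Y_0$ over $U$: after the necessary further birational modifications of source and target, I obtain a birational morphism $\mu:X'\to X$ over $U$ and an equidimensional toroidal fibration $h:(X',D')\to(Y,D_Y)$ over $U$ with $(X',D')$ quasi-smooth (hence $X'$ has only $\mathbb{Q}$-factorial toroidal singularities), $Y$ smooth, and with the reduced divisor $D'$ chosen so as to contain $F:=\mathrm{Exc}(\mu)$, the preimage $\mu^{-1}(\mathrm{Sing}\,X)$, and the strict transform of every prime divisor occurring in a divisor of $V$. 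Because $V$ is defined over $\mathbb{Q}$, this last collection is finite, so such a choice is possible, and this gives part (1).

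Next I would set $a:=\mathrm{totaldiscrep}(X,L)$; this is a rational number with $a>-1$ (rationality because, for the klt pairs in question, all relevant discrepancies are computed on one fixed log resolution of $(X,\sum_i D_i)$, where the $D_i$ are the components of divisors in $V$, so that $a$ is the minimum of finitely many rational affine functions of $\Delta$ over the rational polytope $L$), and we may assume that $L$ contains a divisor of nonzero support, so that in fact $a<0$. I would then define the affine $\mathbb{Q}$-linear map
$$*':L\longrightarrow \mathrm{WDiv}_{\mathbb{R}}(X'),\qquad \Delta\longmapsto \Delta':=\mu_*^{-1}\Delta+\max\{0,-a\}\,F,$$
put $L':=*'(L)$, and let $V'$ be the affine $\mathbb{Q}$-subspace $\{\mu_*^{-1}B+tF \mid B\in V,\ t\in\mathbb{R}\}$ of $\mathrm{WDiv}_{\mathbb{R}}(X')$; then $V'$ is defined over $\mathbb{Q}$, the set $L'$ is a closed rational polytope contained in $V'$, and $*'$ is a $\mathbb{Q}$-linear isomorphism onto $L'$. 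For $\Delta\in L$ one has $\mathrm{Supp}(\Delta')\subseteq D'$, so $(X',\mathrm{Supp}(\Delta'))$ is toroidal; moreover every coefficient of $\Delta'$ is strictly less than $1$ (those of $\mu_*^{-1}\Delta$ because $(X,\Delta)$ is klt, and that of $F$ because it equals $\max\{0,-a\}<1$), so the toroidal pair $(X',\Delta')$ is klt, in particular log canonical, and hence $L'\subseteq\mathcal{L}(V')$. A standard discrepancy computation gives
$$K_{X'}+\Delta'=\mu^*(K_X+\Delta)+G_\Delta,\qquad G_\Delta:=\sum_{E_i\ \mu\text{-exc}}\bigl(a(E_i,X,\Delta)+\max\{0,-a\}\bigr)E_i,$$
with $G_\Delta\geq 0$ because $a(E_i,X,\Delta)\geq a$ and $a+\max\{0,-a\}=\max\{a,0\}\geq 0$. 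Since $G_\Delta$ is effective and $\mu$-exceptional, the projection formula yields $\mu_*\mathcal{O}_{X'}(m(K_{X'}+\Delta'))\cong\mathcal{O}_X(m(K_X+\Delta))$ for every $m\in\mathbb{N}$; as all of this depends affinely on $\Delta$, it holds uniformly over $L$.

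The step I expect to be the main obstacle is the remaining equality $\mathrm{totaldiscrep}(X',L')=\mathrm{totaldiscrep}(X,L)$. One inequality is formal: writing $\Gamma_\Delta:=\mu_*^{-1}\Delta-\sum_i a(E_i,X,\Delta)E_i$ for the crepant pull-back, one has $K_{X'}+\Gamma_\Delta=\mu^*(K_X+\Delta)$, so $(X',\Gamma_\Delta)$ has exactly the discrepancies of $(X,\Delta)$; since $\Delta'=\Gamma_\Delta+G_\Delta\geq\Gamma_\Delta$, every discrepancy of $(X',\Delta')$ is at most the corresponding one of $(X,\Delta)$, and so $\mathrm{totaldiscrep}(X',L')\leq a$. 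For the reverse inequality I would show that no divisor over $X'$ has discrepancy below $a$ with respect to any $(X',\Delta')$, $\Delta\in L$. For a divisor lying on $X'$ this is immediate: a strict transform of a component of $\Delta$ keeps its coefficient, while $a(E_i,X',\Delta')=-\max\{0,-a\}=\min\{0,a\}\geq a$, and any other prime divisor on $X'$ has discrepancy $0\geq a$. For a divisor $E$ exceptional over $X'$ one uses that $(X',D')$ is toroidal and $\mathrm{Supp}(\Delta')\subseteq D'$ to reduce to comparing the discrepancy of a toroidal blow-up of a stratum of $(X',D')$ against the discrepancies already contributing to $\mathrm{totaldiscrep}(X,L)$; here the specific coefficient $\max\{0,-a\}$ placed on $F$ is exactly what makes the comparison succeed, since $\max\{0,-a\}\leq 1$ while $a\leq (\mathrm{codim}-1)-(\text{sum of the boundary coefficients})$ for every stratum arising from components of $\Delta$. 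The only remaining subtlety, already built into the construction, is that a single toroidal model must serve the whole polytope at once; this is automatic because every condition placed on $D'$ refers only to the finitely many prime divisors determined by $V$ together with $\mathrm{Exc}(\mu)$, and not to an individual $\Delta$.
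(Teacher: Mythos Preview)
Your proposal is correct and follows essentially the same route as the paper: fix one boundary, resolve its relative Iitaka fibration, invoke Lemma~\ref{thm2} to see that this single fibration serves every $\Delta\in L$, apply Abramovich--Karu weak semistable reduction to obtain the toroidal model, and then define $\Delta':=\mu_*^{-1}\Delta+\max\{0,-a\}F$ with $a=\mathrm{totaldiscrep}(X,L)$. The only cosmetic difference is that the paper picks an interior rational point $\Delta_0\in L$ and asks that $\mu^{-1}(\Delta_0\cup\mathrm{Sing}(X))\subset D'$, whereas you enumerate directly the finitely many prime components occurring in $V$; since an interior point has full support, these are equivalent. You actually supply more detail than the paper on the equality $\mathrm{totaldiscrep}(X',L')=\mathrm{totaldiscrep}(X,L)$, which the paper dispatches with ``clearly''; your toroidal stratum computation (the inequality $(r-1)-\sum c_i\ge a$ following from $c_i\le -a$ and $(r-1)(1+a)\ge 0$) is the right mechanism. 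One small caveat: Theorem~\ref{thm3} as stated is absolute, so for the relative ample model over $U$ you should cite \cite{BCHM} directly rather than Theorem~\ref{thm3}.
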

	
	The following theorem is a relative version of Theorem \ref{main}.
	\begin{thm}
		\label{relative ample model}
		Let $\pi:X\rightarrow U$ be a projective morphism of normal varieties and $\mathrm{dim}(X)=n$. Let $V$ be a finite dimensional affine subspace of the vector space $\mathrm{WDiv}_{\mathbb{R}}(X)$ which is defined over $\mathbb{Q}$. Fix a nonnegative integer $0\leq k\leq n$. Suppose $L\subset \mathcal{L}(V)$ is a closed rational polytope, such that For any $\Delta \in L$, $(X,\Delta)$ is klt and $\kappa(X,K_X+\Delta)=k$.
		
		Then there are finitely many rational contractions $\pi_j:X\dashrightarrow Z_j,1\leq i\leq l$ over $U$, such that if $\pi : X\dashrightarrow Z$ is an ample model of $K_X+\Delta$ over $U$ for some $\mathbb{Q}-$divisor $\Delta \in L$, then there is an index $1\leq j \leq l$ and an isomorphism $\xi: Z_j\rightarrow Z$ such that $\pi = \xi \circ \pi_j$. 
		\begin{proof}
			It is easy to see that we may assume that $L$ is a convex polytope. By Theorem \ref{fibration}, there is an equidimensional algebraic fibration $h:X'\rightarrow Y$ over $U$ and a $\mathbb{Q}-$linear isomorphism $*': L \rightarrow L'$ with $\mathrm{totaldiscrep}(X,L')=\mathrm{totaldiscrep}(X,L)$. Moreover, $Z$ is the ample model of $(X,\Delta)$ over $U$ if and only if it is the ample model of $(X',\Delta')$ over $U$. Therefore we can replace $X$ by $X'$ and $L$ by $L'$. Then we can assume that there is an equidimensional algebraic fibration $h:X\rightarrow Y$ over $U$, $Y$ is smooth and for every $\mathbb{Q}-$divisor $\Delta\in L$, $\kappa(X_\eta,(K_X+\Delta)|_\eta)=0$, where $X_\eta$ is the generic fiber of $h$.
			
			By Remark \ref{piecewise linear}, we may divide $L$ into finitely many simplexes $\cup_i L_k$. For every $\Delta\in L$, there are divisors $D_\Delta$ and $F_\Delta$ satisfying the following equations
			$$K_X+\Delta\sim_{\mathbb{Q},U} h^* D_\Delta +F_\Delta$$
			$$\mathcal{O}_Y\cong h_*\mathcal{O}_X(\lfloor jF\rfloor)$$
			for all $j\geq 0$. Moreover, $D_\Delta$ is a linear function of $\Delta$ in each $L_k$. Because this division is finite and rational, we only need to prove finiteness of ample models for such $L_k$. Let $\{\Delta_i,1\leq i\leq m\}$ be the vertexes of $L_k$ and let $D_i, F_i$ be short for $D_{\Delta_i}$ and $F_{\Delta_i}$.
			
			It follows from the proof of \cite{Kollr2007} that for every $\mathbb{Q}-$divisor $\Delta_i,1\leq i\leq m$, there exist $\mathbb{Q}-$divisors $B_i$ and $J_i$ on $Y$, such that $(Y,B_Y)$ is klt and
			$$D_i\sim_{\mathbb{Q},U} K_Y+B_i+J_i.$$
			Since the $\mathbb{Q}-$line bundel $J_i$ commutes with birational pull back. We may find a birational morphism $f:Y'\rightarrow Y$, such that
			$$K_{Y'}+B'_i+J'_i\sim_{\mathbb{Q}} f^*(K_Y+B_i+J_i).$$
			where $J'_i$ is a $U-$nef $\mathbb{Q}-$line bundle, $(Y',B'_i)$ is subklt and $f_*B'_i=B_i$. Because $D_i$ is big, there is an ample $\mathbb{Q}-$line bundle $A'$ and big $\mathbb{Q}-$divisors $E_i$ on $Y'$, such that $f^*D_i\sim_{\mathbb{Q}} f^*A'+E_i$. Let $\epsilon>0$ be a small enough rational number such that $(Y',B'_i+C'_i+\epsilon f^*A'),1\leq i\leq m$ is sub klt, where $C'_i\in |J'_i+\epsilon E_i|_{\mathbb{Q}}$ is a general member. Let $C_i:=f_*(B'_i+C'_i)$ and $A\in|\epsilon A'|_{\mathbb{Q}}$ be a general member, then we have $(Y,A+C_i),1\leq i\leq m$ is klt and 
			$$(1+\epsilon)(K_Y+B_i+J_i)\sim_{\mathbb{Q}} K_Y+A+C_i.$$ 
			Let $W\subset \mathrm{WDiv}_{\mathbb{Q}}(Y)$ be the finite dimensional subspace spanned by the $\{C_i,1\leq i\leq m\}$. For a point $(a_1,...,a_m)\in \mathcal{D}$, we have 
			$$(1+\epsilon)(K_X+\sum_{i=1}^{m}a_i\Delta_i)\sim_{\mathbb{Q},U} h^*(K_Y+A+\sum_{i=1}^{m}a_iC_i)+R'_{(a_1,...,a_m)}$$
			where $R'_{(a_1,...,a_m)}$ is $\mathbb{Q}-$effective and $\mathcal{O}_Y\cong h_*\mathcal{O}_X(\lfloor jR'_{(a_1,...,a_m)}\rfloor)$ for all $j\geq 0$.
			Therefore a projective variety $Y'$ is the ample model of $(X,\sum_{i=1}^{m}a_i\Delta_i)$ if and only if it is the canonical model of $(Y,A+\sum_{i=1}^{m}a_iC_i)$. Then we have a $\mathbb{Q}-$linear map $C_*:L_k\rightarrow\mathcal{L}_A(W)$, and for every $\mathbb{R}-$divisor $\Delta \in L_k$, $(X,\Delta)$ and $(Y,C_\Delta) $ have the same ample model over $U$. Therefore the claim is now immediate from \cite{BCHM} Corollary 1.1.5.
		\end{proof}
	\end{thm}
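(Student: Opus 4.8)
The plan is to reduce the statement to the finiteness of ample models for a big klt pair on the base of the (relative) Iitaka fibration, where \cite{BCHM} Corollary 1.1.5 applies directly. First I would note that it is enough to treat the case when $L$ is a convex rational polytope, since any closed rational polytope is a finite union of such. Next I would invoke Theorem \ref{fibration} to replace $\pi:X\rightarrow U$ by the model $\mu:X'\rightarrow X$ equipped with the equidimensional algebraic fibration $h:X'\rightarrow Y$ over $U$, with $Y$ smooth and $X'$ $\mathbb{Q}$-factorial toroidal, and to replace $L$ by the $\mathbb{Q}$-linearly isomorphic polytope $L'$ with the same total discrepancy. Because $\mu_*\mathcal{O}_{X'}(m(K_{X'}+\Delta'))\cong \mathcal{O}_X(m(K_X+\Delta))$ for every $m$, ample models over $U$ are unaffected by this replacement, so I may assume from the start that $h:X\rightarrow Y$ is an equidimensional algebraic fibration over $U$ with $Y$ smooth which is birational to the relative Iitaka fibration of $K_X+\Delta$; by Lemma \ref{thm2} this holds for every $\Delta\in L$ simultaneously, so $\kappa(X_\eta,(K_X+\Delta)|_\eta)=0$ on the generic fiber $X_\eta$ of $h$ for all $\Delta\in L$.

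The core of the argument takes place on $Y$. For each $\Delta\in L$ I would fix an effective representative of $K_X+\Delta$ over $U$ and pull out the maximal effective $\mathbb{Q}$-divisor from $Y$, writing $K_X+\Delta\sim_{\mathbb{Q},U} h^*D_\Delta+F_\Delta$ with $h_*\mathcal{O}_X(\lfloor jF_\Delta\rfloor)\cong \mathcal{O}_Y$ for all $j\geq 0$. By Remark \ref{piecewise linear} the assignment $\Delta\mapsto D_\Delta$ is piecewise $\mathbb{Q}$-linear, so $L$ subdivides into finitely many rational simplices on each of which it is $\mathbb{Q}$-linear, and it suffices to prove the statement over one such simplex $L_k$ with vertices $\Delta_1,\dots,\Delta_m$. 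On $L_k$ I would apply the canonical bundle formula of \cite{Kollr2007} to each $D_i:=D_{\Delta_i}$ to get $D_i\sim_{\mathbb{Q},U} K_Y+B_i+J_i$ with $(Y,B_i)$ klt and $J_i$ the moduli part. Since the moduli part depends only on the general fibre of $h$ and commutes with birational pullback, I can pass to a single birational model $f:Y'\rightarrow Y$ on which all the $J'_i$ are $U$-nef. Using that each $D_i$ is big over $U$, I would absorb $J'_i$ together with a small multiple $\epsilon f^*A'$ of an ample class into a general effective member and push forward, producing klt pairs $(Y,A+C_i)$ with $(1+\epsilon)(K_Y+B_i+J_i)\sim_{\mathbb{Q}} K_Y+A+C_i$. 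Taking convex combinations along $L_k$ then yields, for each $(a_1,\dots,a_m)\in\mathcal{D}$, a relation $(1+\epsilon)(K_X+\sum a_i\Delta_i)\sim_{\mathbb{Q},U} h^*(K_Y+A+\sum a_iC_i)+R'$ with $R'\geq 0$ and $h_*\mathcal{O}_X(\lfloor jR'\rfloor)\cong\mathcal{O}_Y$ for all $j\geq 0$.

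The payoff is that, because $R'$ contributes no global sections, a projective variety over $U$ is the ample model of $K_X+\sum a_i\Delta_i$ over $U$ if and only if it is the canonical model of the big klt pair $K_Y+A+\sum a_iC_i$ over $U$. This gives a $\mathbb{Q}$-linear map $C_*:L_k\rightarrow\mathcal{L}_A(W)$, where $W\subset\mathrm{WDiv}_{\mathbb{Q}}(Y)$ is the subspace spanned by the $C_i$, which intertwines ample models; finiteness of ample models over $\mathcal{L}_A(W)$ is \cite{BCHM} Corollary 1.1.5, and composing the resulting finite list of rational contractions $Y\dashrightarrow Z_j$ with $h$ and $\mu$ produces the desired finitely many $\pi_j:X\dashrightarrow Z_j$ over $U$. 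The step where the real care is needed, and the main obstacle, is the uniformity in $\Delta$: one must arrange a single birational model $Y'$ that makes all the moduli parts nef at once, and one must know that the boundary parts $B_\Delta$ vary $\mathbb{Q}$-linearly on each simplex. This is exactly what the piecewise-linear analysis of Remark \ref{piecewise linear} together with the base-change behaviour of the canonical bundle formula provides, and it is the reason the proof must first replace $X$ by the equidimensional toroidal model of Theorem \ref{fibration} rather than working on $X$ directly.
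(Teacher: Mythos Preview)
Your proposal is correct and follows essentially the same approach as the paper's own proof: reduce to a convex polytope, apply Theorem~\ref{fibration} to pass to an equidimensional toroidal model over a smooth base $Y$, use Remark~\ref{piecewise linear} to subdivide $L$ into rational simplices on which $D_\Delta$ is $\mathbb{Q}$-linear, apply the canonical bundle formula of \cite{Kollr2007} at the vertices, pass to a common Ambro model to make all moduli parts nef, absorb them using bigness into a klt boundary with an ample part, and then invoke \cite{BCHM} Corollary~1.1.5 on $\mathcal{L}_A(W)$. The structure, the key lemmas, and the final reduction all match the paper.
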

	\begin{rmk}
		\label{rmk of ample model}
		Because the map $\Delta\rightarrow D_\Delta \rightarrow C_\Delta$ is $\mathbb{Q}-$linear, by Theorem 
		\ref{relative ample model}, it is easy to see that if $L$ is a rational polytope, then $\mathcal{A}_{\pi,\phi}(V)\cap L$ is a rational polytope.
	\end{rmk}

	\subsection{Finiteness of good minimal models}

	\begin{lemma}
		\label{numerical}
		Let $\pi:X\rightarrow U$ be a projective morphism of normal varieties, $V$ be a finite dimensional affine subspace of $\mathrm{WDiv}_{\mathbb{R}}(X)$ which is defined over the rationals. Fix a nonnegative integer $0\leq k\leq n$, let $L\subset V$ be a closed convex rational polytope of $\mathbb{R}-$divisors, such that for any $\mathbb{Q}-$divisor $D\in \Omega$, $\kappa(X/U,D)=k\geq 0$.
		
		If there is a $\mathbb{R}-$divisor $D_0\in \mathrm{int}(L)$ satisfying $\nu(X/U,D_0)=\kappa_\iota(X/U,D_0)$. Then for any $\mathbb{R}-$divisor $D\in L$, we have $\nu(X/U,D)=\kappa_\iota(X/U,D)$.
		\begin{proof}
			Let $D_i,1\leq i\leq m$ be the vertexes of $L$, by assumption, $D_i$ are $\mathbb{Q}-$divisors and $\kappa(X/U,D_i)=k$. Choose $E_i$ such that $D_i\sim_{\mathbb{Q},U}E_i\geq 0$, then for every point $(a_1,a_2,...,a_m)\in \mathcal{D}$, we have $\sum_{i=1}^{m}a_iD_i\sim_{\mathbb{R},U} \sum_{i=1}^{m} a_iE_i\geq 0$. By Proposition \ref{prop}, we may replace $D_i$ by $E_i$, then we may assume $D\geq 0$ for every $D\in L$ and replace $\kappa_\iota(X,D)$ by $\kappa(X,D)$.
			
			If for some $D_0\in \mathrm{int}(L)$, we have $\nu(X/U,D)=\kappa(X/U,D)$. Then for every $D\in L$, we have $\mathrm{Supp}(D)\subset \mathrm{Supp}(D_0)$. Thus the claim comes easily from the following inequality.
			$$\kappa(X/U,D)\leq\nu(X/U,D)\leq\nu(X/U,D_0)=\nu(X/U,D_0)=\kappa(X/U,D_0)=k=\kappa(X/U,D).$$
		\end{proof}
	\end{lemma}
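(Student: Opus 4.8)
The plan is to reduce to effective divisors and then use the support–monotonicity of the relative invariant Iitaka and numerical dimensions recorded in Proposition~\ref{prop}(4) to spread the hypothesis over the whole polytope. For the reduction, let $D_1,\dots,D_m$ be the vertices of $L$; since $L$ is a rational polytope these are $\mathbb{Q}$-divisors, and by hypothesis $\kappa(X/U,D_i)=k\ge 0$, so we may choose effective $\mathbb{Q}$-divisors $E_i\sim_{\mathbb{Q},U}D_i$. The affine map sending each $D_i$ to $E_i$ carries $L$ onto a rational polytope $L'$ of effective divisors, and for every $D=\sum a_iD_i\in L$ the corresponding point satisfies $\sum a_iE_i\sim_{\mathbb{R},U}D$, hence has the same relative invariant Iitaka dimension and relative numerical dimension; likewise for $D_0$. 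So it suffices to treat $L'$: we may assume every $D\in L$ is effective, that $\kappa_\iota(X/U,D)=\kappa(X/U,D)$, and that $\kappa(X/U,Q)=k$ for every rational point $Q\in L$.

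The crux, which I expect to be the main obstacle, is to upgrade the equality $\kappa(X/U,-)=k$ from the rational points of $L$ to all of $L$. A point $D$ of $L$ lies in the relative interior of a unique face $G$, and such a point is a strictly positive convex combination of the vertices of $G$; since all divisors in play are effective, its support equals $\bigcup_{v}\mathrm{Supp}(v)$, the union over the vertices $v$ of $G$. As $G$ is again a rational polytope, its relative interior contains a rational point $Q$, and by the same description $\mathrm{Supp}(Q)=\mathrm{Supp}(D)$. Applying Proposition~\ref{prop}(4) in both directions then yields $\kappa(X/U,D)=\kappa(X/U,Q)$ and $\nu(X/U,D)=\nu(X/U,Q)$, and the first equals $k$ because $Q$ is a rational point of $L$. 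In particular $\kappa(X/U,D)=k$ for every $D\in L$, including $D_0$.

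To finish: since $D_0\in\mathrm{int}(L)$, it is a strictly positive combination of all of $D_1,\dots,D_m$, so $\mathrm{Supp}(D_0)=\bigcup_i\mathrm{Supp}(D_i)\supseteq\mathrm{Supp}(D)$ for every $D\in L$; hence $\nu(X/U,D)\le\nu(X/U,D_0)$ by Proposition~\ref{prop}(4). Combining this with Proposition~\ref{prop}(2), the previous step applied to both $D$ and $D_0$, and the hypothesis $\nu(X/U,D_0)=\kappa_\iota(X/U,D_0)$, we obtain
$$k=\kappa_\iota(X/U,D)\le\nu(X/U,D)\le\nu(X/U,D_0)=\kappa_\iota(X/U,D_0)=k,$$
so every term equals $k$; in particular $\nu(X/U,D)=\kappa_\iota(X/U,D)$, as required. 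The only step needing care is the second paragraph, and there the content is purely the elementary combinatorics of rational polytopes — points in the relative interior of a common face share the same support, and the relative interior of a rational polytope contains rational points — fed into the support–monotonicity of $\kappa_\iota$ and $\nu$; everything else is formal once Proposition~\ref{prop} is in hand.
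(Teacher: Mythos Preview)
Your proof is correct and follows essentially the same route as the paper: reduce to effective divisors by replacing the vertices $D_i$ by effective $E_i\sim_{\mathbb{Q},U}D_i$, then use the support containment $\mathrm{Supp}(D)\subseteq\mathrm{Supp}(D_0)$ together with Proposition~\ref{prop}(2),(4) to obtain the sandwich $\kappa\le\nu\le\nu(D_0)=\kappa(D_0)=k=\kappa(D)$. The only difference is that you are more explicit about the step $\kappa(X/U,D)=k$ for arbitrary (possibly irrational) $D\in L$, supplying the face--rational-point argument; the paper simply writes this equality at the end of its chain without further comment.
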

	
	\begin{thm}
		\label{countably}
		Let $X$ be a projective variety, then there are at most countably many birational contractions $X\dashrightarrow X_i, i=1,2,...$.
		\begin{proof}
			If $f:X\dashrightarrow X'$ is a birational contraction, let $p:W\rightarrow X$ and $q:W\rightarrow X'$ resolve the indeterminacy of $f$. Let $A$ be a general ample divisor on $X'$, define $D:=p_*q^*A$. By negativity lemma, we have $p^*D=q^*A+E$ for some $p-$exceptional divisor $E\geq 0$, hence $q-$exceptional. It is easy to see that 
			$$R(X,D)=R(W,p^*D)=R(X',A).$$
			If $D'$ is divisor on $X$ satisfying $D'\equiv D $, then $q_*p^* D'\equiv q_*p^*D=A$, and $q_*p^* D' $ is an ample divisor on $X'$, denote it by $A'$. Since $p^*D'-q^*A' \equiv p^*D-q^*A$ and they are both $q-$exceptional, by the negativity lemma, we have $p^* D'-q^*A'=p^*D-q^*A=E\geq 0$, hence $ X'=\mathrm{Proj}\ R(X',A')=\mathrm{Proj}\ R(X,D')$. Therefore each birational contraction is determined by the numerical class of a big $\mathbb{Q}-$divisor. Since $N^1(X)$ is finite dimensional, the claim follows.
		\end{proof}
	\end{thm}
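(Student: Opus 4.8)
The plan is to show that a birational contraction out of $X$ is completely pinned down by the numerical class of a single big $\mathbb{Q}$-divisor on $X$, and then to invoke the finite dimensionality of $N^{1}(X)$, a real vector space which contains only countably many rational classes.

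First I would take an arbitrary birational contraction $f\colon X\dashrightarrow X'$ with $X'$ normal and projective, resolve its indeterminacy by a common resolution $p\colon W\to X$, $q\colon W\to X'$, fix a general ample $\mathbb{Q}$-divisor $A$ on $X'$, and set $D:=p_{*}q^{*}A$. The divisor $E:=p^{*}D-q^{*}A$ is $p$-exceptional, since $p_{*}E=D-p_{*}q^{*}A=0$, and it satisfies $(-E)\cdot C=q^{*}A\cdot C-p^{*}D\cdot C=A\cdot q_{*}C\ge 0$ for every $p$-contracted curve $C$ (such $C$ being $p^{*}D$-trivial). Hence $-E$ is $p$-nef and the negativity lemma gives $E\ge 0$; since $f$ is a contraction, a $p$-exceptional divisor is automatically $q$-exceptional, so $E$ is effective and $q$-exceptional. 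Pushing forward sections along $q$ then yields
$$R(X,D)\;\cong\;R(W,p^{*}D)\;\cong\;R(W,q^{*}A+E)\;\cong\;R(X',A),$$
whence $X'\cong\mathrm{Proj}\,R(X',A)\cong\mathrm{Proj}\,R(X,D)$, and moreover the rational map $X\dashrightarrow\mathrm{Proj}\,R(X,D)$ induced by the linear systems $|mD|$ recovers $f$ itself.

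Next I would verify that the pair $(X',f)$ so recovered depends only on the numerical class $[D]\in N^{1}(X)$. If $D'\equiv D$, then $A':=q_{*}p^{*}D'$ is numerically equivalent to $q_{*}p^{*}D=A+q_{*}E=A$, hence $A'$ is again ample (ampleness being a numerical condition); both $p^{*}D'-q^{*}A'$ and $p^{*}D-q^{*}A=E$ are $q$-exceptional and they are numerically equivalent, so the negativity lemma applied to their difference and its negative forces $p^{*}D'-q^{*}A'=E$. Running the same section-ring computation gives $X'\cong\mathrm{Proj}\,R(X,D')$ with the same underlying map $f$. Therefore every birational contraction out of $X$ arises from a (big, rational) class in the finite dimensional vector space $N^{1}(X)$, and since $N^{1}(X)_{\mathbb{Q}}$ is countable there are at most countably many birational contractions $X\dashrightarrow X_{i}$.

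The step I expect to be the main obstacle is the bookkeeping around the negativity lemma rather than anything structural: one must check that $p^{*}D-q^{*}A$ is genuinely exceptional over $X'$ (so that the lemma applies), that ampleness of $A'$ survives purely because ampleness is numerical, and — crucially for the counting — that the \emph{same} effective exceptional divisor $E$ compares $D'$ to $A'$ for every $D'\equiv D$, which is exactly what rigidifies the model rather than merely the abstract graded ring. The remaining ingredients, namely that $\mathrm{Proj}$ of the section ring of an ample divisor recovers the variety and that a finite dimensional real vector space has countably many rational points, are formal.
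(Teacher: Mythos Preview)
Your proposal is correct and follows essentially the same route as the paper: resolve the contraction, pull back a general ample divisor, use the negativity lemma to identify the effective $q$-exceptional difference $E$, deduce the section-ring identification, and then show invariance under numerical equivalence to conclude via the countability of $N^{1}(X)_{\mathbb{Q}}$. Your write-up is in fact more explicit than the paper's at the two delicate points (why $E\ge0$ via $p$-nefness of $-E$, and why the \emph{same} $E$ works for any $D'\equiv D$), but the architecture is identical.
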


	\begin{thm}
		
		\label{finiteness of good minimal model}
		Let $X$ be a projective normal variety over a normal variety $U$, $V$ be an affine finite dimensional subspace of $\mathrm{WDIV}_{\mathbb{R}}(X)$ which is defined over the rationals, suppose $L$ is a rational polytope of $\mathcal{L}(V)$ such that for any $\mathbb{Q}-$divisor $\Delta\in L$, $(X,\Delta)$ has a good minimal model over $U$ and has the same ample model $Z$ over $U$. Then there is a birational contraction $f:X\dashrightarrow X'$, such that for any $\mathbb{R}-$divisor $\Delta\in L$, $f$ is a good minimal model of $(X,\Delta)$ over $U$.
		\begin{proof}
			Because for any $\mathbb{Q}-$divisor $\Delta\in L$, $(X,\Delta)$ have the same ample model $Z$ over $U$ and $(X,\Delta)$ has a good minimal model, it follows that $\kappa(X/U,K_X+\Delta)$ is the same, and by Lemma \ref{vandk} and Lemma \ref{numerical},  for any $\mathbb{R}-$divisor $\Delta$, we have $\kappa_\iota(X,K_X+\Delta)=\nu(X,K_X+\Delta)$, which implies that for any $\mathbb{R}-$divisor $\Delta \in L$, $ K_X+\Delta$ has a good minimal model over $U$. 
			
			By Theorem \ref{countably}, $X$ has at most countably many birational contractions $f:X\dashrightarrow X_i,\ i\geq 0$. Thus we can divide $L$ into countably many subset $\cup_{i\geq 0} L_i$, such that for any $\mathbb{R}-$divisor $\Delta \in L_i$, $ f_i$ is a good minimal model of $(X,\Delta)$ over $U$. Because $f:X\dashrightarrow X'$ is a good minimal model for both $ (X,\Delta_1)$ and $(X,\Delta_2)$ implies that $f$ is also a good minimal model for  $(X,\lambda \Delta_1+(1-\lambda)\Delta_2),\ \forall \lambda\in[0,1]$, then we have each $L_i$ is convex.
			
			By the Pigeon-hole Principle, we may assume $L_1$ spans $V$, which means there are some $\mathbb{Q}-$divisors $\{\Delta_i\in L_1,\ 1\leq i\leq m\}$ spanning $V$ and $f_1$ is a good minimal model for $(X,\Delta_j),\ 1\leq j\leq m$. So $K_{X_1}+f_{1*}\Delta_j, 1\leq j\leq m$ are semiample over $U$. Because $(X,\Delta_j)$ and $(X_1,f_{1*}\Delta_j)$ have the same ample model $Z$ for every $1\leq j\leq m$, by \cite[Lemma 3.6.5]{BCHM}, $K_{X_1}+f_{1*}\Delta_j, 1\leq j\leq m$ define a morphism $h_1:X_1\rightarrow Z$, then there are some divisors $D_i,1\leq j\leq m $ on $Z$, such that
			$$K_{X_1}+f_{1*}\Delta_j \sim_{\mathbb{Q},U} h_1 ^* D_j,\ \ 1\leq j \leq m.$$
			Because $ \{\Delta_j,1\leq j\leq m\}$ span $V$, then for any divisor $\Delta\in L$, there is a divisor $D$ on $Z$ such that $K_{X_1}+f_{1*}\Delta\sim_{\mathbb{Q},U} h_1^* D$.
			
			For a $\mathbb{Q}-$divisor $\Delta\in L$, let $(X',\Delta')$ be a good minimal model of $(X,\Delta)$ and $h':X'\rightarrow Z$ be the morphism from the good minimal model to the ample model. let $\Delta^1$ be the image of $\Delta$ on $X_1$. Choose a common resolution $p:W\rightarrow X_1$ and $ q:W\rightarrow X'$ of $X'$ and $X_1$. Because $ \Delta'$ and $\Delta^1$ are the images of $\Delta$ on $X'$ and $X_1$, we have an equation 
			\begin{equation}
				\label{equa5}
				p^*(K_{X_1}+\Delta^1)+E= q^*(K_{X'}+\Delta')+F
			\end{equation}
			where $E$ is $p-$exceptional and $F$ is $q-$exceptional. By assumption, there are divisors $C'$ and $C^1$ on $Z$, such that $K_{X_1}+\Delta^1\sim_{\mathbb{Q}} h_1 ^*C^1 $ and $ K_{X'}+\Delta'\sim_{\mathbb{Q}} {h'} ^*C'$. Let $f:= p\circ h_1=q\circ h'$, then \eqref{equa5} is equal to
			$$ f^*(C_1-C_2)=F-E$$
			Since $F-E$ is either exceptional for $p$ or exceptional for $q$, $F-E$ does not contain the whole fiber of any prime divisor on $Z$. Therefore $C_1=C_2$, which means $(X',\Delta')$ is crepant birational with $(X_1,\Delta^1)$. By Lemma \ref{good minimal model} $(X_1,\Delta^1)$ is a good minimal model for $(X,\Delta)$. Therefore, for every $\mathbb{Q}-$divisor $\Delta\in L$, $f_1$ is a good minimal model for $(X,\Delta)$. Because $L$ is a rational polytope, any $\mathbb{R}-$divisor $\Delta \in L$ is a convex combination of $\mathbb{Q}-$divisors in $L$. Therefore, for any $\mathbb{R}-$divisor $\Delta \in L$, $f_1$ is a good minimal model for $(X,\Delta)$.
		\end{proof}
	\end{thm}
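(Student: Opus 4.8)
The plan is to upgrade the hypothesis from $\mathbb{Q}$- to $\mathbb{R}$-boundaries, then to use that $X$ admits only countably many birational contractions to isolate a single candidate model, and finally to verify that this candidate is a good minimal model for every $\Delta\in L$ by comparing it crepant-birationally with a genuine good minimal model. To begin, I would replace $V$ by the affine span of $L$, so that $L$ has nonempty interior in $V$. Since all $\mathbb{Q}$-divisors $\Delta\in L$ share the ample model $Z$ over $U$ and admit a good minimal model over $U$, Lemma \ref{vandk} gives $\kappa_\iota(X/U,K_X+\Delta)=\nu(X/U,K_X+\Delta)=k$ for these $\Delta$, where $k=\dim Z-\dim U$ is independent of $\Delta$; feeding $K_X+\Delta_0$ for a rational interior point $\Delta_0$ into Lemma \ref{numerical}, the equality $\nu=\kappa_\iota$ propagates to every $\mathbb{R}$-divisor $\Delta\in L$, and hence by Lemma \ref{vandk} every such $(X,\Delta)$ has a good minimal model over $U$, realised by some birational contraction.

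By Theorem \ref{countably} there are only countably many birational contractions $f_i:X\dashrightarrow X_i$, so I would write $L=\bigcup_{i\ge1}L_i$ with $L_i=\{\Delta\in L:\ f_i\text{ is a good minimal model of }(X,\Delta)\text{ over }U\}$. Each $L_i$ is convex, because a birational contraction does not depend on the boundary, discrepancies are affine-linear in the boundary, and a nonnegative combination of divisors semiample over $U$ is again semiample over $U$; thus all the defining properties of a good minimal model are stable under convex combinations of $\Delta$. Since $L$ has nonempty interior in $V$ while a countable union of proper affine subspaces of $V$ has empty interior, some $L_j$ is not contained in a proper affine subspace and hence affinely spans $V$; rename it $L_1$, with model $f_1:X\dashrightarrow X_1$. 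In particular there are $\mathbb{Q}$-divisors $\Delta_1,\dots,\Delta_m\in L_1$ affinely spanning $V$ such that $f_1$ is a good minimal model of each $(X,\Delta_j)$.

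It remains to promote $f_1$ to a good minimal model of $(X,\Delta)$ for every $\Delta\in L$. The divisors $K_{X_1}+f_{1*}\Delta_j$ are semiample over $U$ with ample model $Z$, so by \cite[Lemma 3.6.5]{BCHM} they induce a common morphism $h_1:X_1\to Z$ and $K_{X_1}+f_{1*}\Delta_j\sim_{\mathbb{Q},U}h_1^*D_j$ for divisors $D_j$ on $Z$; by affine-linearity of $\Delta\mapsto K_{X_1}+f_{1*}\Delta$ this yields $K_{X_1}+f_{1*}\Delta\sim_{\mathbb{Q},U}h_1^*D_\Delta$ for all $\Delta\in L$, with $D_\Delta$ a $\mathbb{Q}$-divisor on $Z$ when $\Delta$ is rational. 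Fix a rational $\Delta\in L$ and let $(X',\Delta')$ be a genuine good minimal model over $U$, with ample model morphism $h':X'\to Z$ and $K_{X'}+\Delta'\sim_{\mathbb{Q},U}{h'}^*C'$ for an ample divisor $C'$ on $Z$. On a common resolution $W$ of $X_1$ and $X'$, with maps $p$ and $q$, the divisor $p^*(K_{X_1}+f_{1*}\Delta)-q^*(K_{X'}+\Delta')$ is supported on divisors exceptional over $X_1$ or over $X'$, as $f_{1*}\Delta$ and $\Delta'$ are both strict transforms of $\Delta$ and $f_1$ extracts no divisor; but it also equals the pullback from $Z$ of $D_\Delta-C'$, which, were it nonzero, would have a component dominating a prime divisor of $Z$ and hence exceptional over neither $X_1$ nor $X'$ (because $h_1$ and $h'$ are dominant). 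Therefore $D_\Delta=C'$, so $K_{X_1}+f_{1*}\Delta$ is semiample over $U$ and $(X_1,f_{1*}\Delta)$ is crepant birational to the good minimal model $(X',\Delta')$; by Lemma \ref{good minimal model}, together with the fact that a pair crepant birational to a good minimal model is itself one, $f_1$ is a good minimal model of $(X,\Delta)$ for every rational $\Delta\in L$, and so, writing an arbitrary $\mathbb{R}$-divisor in $L$ as a convex combination of rational ones and using the convexity observed above, for every $\Delta\in L$.

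I expect the main obstacle to be this last crepant birational comparison: one must verify carefully that the difference of the two pullbacks really is supported on exceptional divisors — which rests on $f_1$ being a birational contraction with $f_{1*}\Delta$ and $\Delta'$ strict transforms of $\Delta$ — and that a divisor pulled back from $Z$ cannot be entirely exceptional over both $X_1$ and $X'$, which uses the dominance of the ample-model morphisms; one also needs to confirm that $(X_1,f_{1*}\Delta)$ genuinely qualifies as a minimal model, which follows once $K_{X_1}+f_{1*}\Delta$ is seen to be nef and the discrepancy inequalities are transported from $(X',\Delta')$ across the codimension-one identification.
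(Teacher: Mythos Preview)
Your proposal is correct and follows essentially the same route as the paper's proof: reduce to $\mathbb{R}$-boundaries via Lemmas \ref{vandk} and \ref{numerical}, use Theorem \ref{countably} to cover $L$ by countably many convex pieces, pick one that spans $V$, pull back through the common ample model $Z$ using \cite[Lemma 3.6.5]{BCHM}, and then compare crepant-birationally on a common resolution with a genuine good minimal model, invoking Lemma \ref{good minimal model}. Your explicit replacement of $V$ by the affine span of $L$ and your more careful justification of why the exceptional difference cannot be a nontrivial pullback from $Z$ are, if anything, slight improvements in exposition over the paper's version.
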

	The following theorem is the relative version of theorem \ref{main2}.
	\begin{thm}
		\label{relative good minimal model}
		Let $X$ be a projective normal variety over a normal variety $U$ and $\mathrm{dim}(X)=n$. Let $V$ be a finite dimensional affine subspace of the vector space $\mathrm{WDiv}_{\mathbb{R}}(X)$ which is defined over $\mathbb{Q}$. Fix a nonnegative integer $0\leq k\leq n$. Suppose $L\subset \mathcal{L}(V)$ is a closed rational polytope, such that For any $\Delta \in L$, $(X,\Delta)$ is klt and $\kappa(X,K_X+\Delta)=k$.
		
		If for some $\mathbb{R}-$divisor $\Delta_0 \in \mathrm{int}(L)$, $K_X+\Delta_0$ has a good minimal model over $U$. Then for any $\Delta\in L$, $K_X+\Delta$ has a good minimal model over $U$. And there are finitely many birational contractions $\phi_j:X\dashrightarrow X_j$, $1\leq i\leq m$, such that for any $\mathbb{R}-$divisor $\Delta\in L$, if $\phi : X\dashrightarrow Y$ is good minimal model of $K_X+\Delta$ over $U$, then there is an index $1\leq j \leq l$ such that $(Y,\phi_*\Delta)$ is crepant birational with $ (X_j,\phi_{j *}\Delta)$. 
		\begin{proof}
			If for some $\mathbb{R}-$divisor $\Delta_0 \in \mathrm{int}(L)$, $K_X+\Delta_0$ has a good minimal model over $U$, then by Lemma \ref{numerical} and Lemma \ref{vandk}, for any $\mathbb{R}-$divisor $\Delta \in U$, $K_X+\Delta$ has a good minimal model over $U$. Then the claims comes easily from Theorem \ref{relative ample model} Theorem \ref{finiteness of good minimal model}.
		\end{proof}
	\end{thm}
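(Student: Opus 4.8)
The plan is to separate the argument into two parts: first, upgrading the hypothesis on the single divisor $\Delta_0$ to the statement that $K_X+\Delta$ admits a good minimal model over $U$ for \emph{every} $\mathbb{R}$-divisor $\Delta\in L$; and second, extracting the finite list of birational contractions (and the crepant-birationality statement) from the finiteness results already established in this section. All the substantive geometry is packaged in Theorem~\ref{relative ample model} and Theorem~\ref{finiteness of good minimal model}, so the work here is one of assembly and bookkeeping.

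\emph{Step 1 (good minimal models throughout $L$).} Applying Lemma~\ref{vandk} to the good minimal model of $K_X+\Delta_0$ over $U$ gives $\kappa_\iota(X/U,K_X+\Delta_0)=\nu(X/U,K_X+\Delta_0)$. By hypothesis $\Delta_0\in\mathrm{int}(L)$ and every $\mathbb{Q}$-divisor $\Delta\in L$ has $\kappa(X/U,K_X+\Delta)=k\ge 0$, so Lemma~\ref{numerical}, applied to the translated rational polytope $\{K_X+\Delta:\Delta\in L\}$ with distinguished interior point $K_X+\Delta_0$, yields $\kappa_\iota(X/U,K_X+\Delta)=\nu(X/U,K_X+\Delta)$ for every $\mathbb{R}$-divisor $\Delta\in L$. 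Lemma~\ref{vandk} then gives that $K_X+\Delta$ has a good minimal model over $U$ for all $\Delta\in L$; in particular the ample model of $K_X+\Delta$ over $U$ exists for all such $\Delta$.

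\emph{Step 2 (finite list of candidate contractions).} By Theorem~\ref{relative ample model} there are finitely many rational contractions $\pi_1,\dots,\pi_l:X\dashrightarrow Z_j$ over $U$ so that, for any $\mathbb{Q}$-divisor $\Delta\in L$, the ample model of $K_X+\Delta$ over $U$ is isomorphic to one of the $Z_j$ compatibly with $\pi_j$. By Remark~\ref{rmk of ample model}, each $L_j:=\mathcal{A}_{\pi,\pi_j}(V)\cap L$ is a rational polytope; these are pairwise disjoint, their union contains all $\mathbb{Q}$-divisors of $L$, and, since the $L_j$ are rational polytopes whose rational points exhaust those of $L$, in fact $L=\bigsqcup_j L_j$. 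On each nonempty $L_j$, every $\mathbb{Q}$-divisor has a good minimal model over $U$ (Step~1) and the common ample model $Z_j$ over $U$, so Theorem~\ref{finiteness of good minimal model} provides a birational contraction $\phi_j:X\dashrightarrow X_j$ which is a good minimal model of $K_X+\Delta$ over $U$ for \emph{every} $\mathbb{R}$-divisor $\Delta\in L_j$. The $\phi_j$ (over those $j$ with $L_j\neq\emptyset$) are the desired finitely many birational contractions. Finally, given any $\mathbb{R}$-divisor $\Delta\in L$, choose $j$ with $\Delta\in L_j$; if $\phi:X\dashrightarrow Y$ is any good minimal model of $K_X+\Delta$ over $U$, then $\phi$ and $\phi_j$ are both good minimal models of the klt (hence dlt) pair $(X,\Delta)$ over $U$, so Lemma~\ref{good minimal model}(2) shows $Y\dashrightarrow X_j$ is an isomorphism in codimension one with $a(E;Y,\phi_*\Delta)=a(E;X_j,\phi_{j*}\Delta)$ for every divisor $E$ over $X$, i.e.\ $(Y,\phi_*\Delta)$ is crepant birational to $(X_j,\phi_{j*}\Delta)$.

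The step I expect to require the most care is the exhaustion claim in Step~2: Theorem~\ref{relative ample model} only controls ample models of \emph{$\mathbb{Q}$-divisors}, so one must argue that the ample-model regions $\mathcal{A}_{\pi,\pi_j}(V)\cap L$ cover \emph{all} of $L$, including its irrational points, before Theorem~\ref{finiteness of good minimal model} can be applied piecewise (this is where one uses that each $\mathcal{A}_{\pi,\pi_j}(V)\cap L$ is a genuine rational polytope, so the walls of the decomposition are rational). A secondary point of care is the consistent use of the \emph{relative} invariant and numerical Iitaka dimensions over $U$ when chaining Lemma~\ref{vandk} with Lemma~\ref{numerical}; phrasing everything relatively and working with the translate $K_X+L$ keeps this straight.
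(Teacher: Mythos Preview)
Your proof is correct and follows exactly the approach the paper intends: first propagate the existence of good minimal models from the interior point $\Delta_0$ to all of $L$ via Lemma~\ref{vandk} and Lemma~\ref{numerical}, then decompose $L$ using Theorem~\ref{relative ample model} and Remark~\ref{rmk of ample model} into pieces with common ample model and apply Theorem~\ref{finiteness of good minimal model} on each piece, finishing with Lemma~\ref{good minimal model}(2). The paper's proof compresses Step~2 into a single sentence, so your expansion is precisely what is meant; the only minor slip is the claim that the $L_j$ are simultaneously closed rational polytopes \emph{and} pairwise disjoint while covering the connected set $L$---in reality the closures meet along lower-dimensional faces, but this has no effect on the argument since coverage, not disjointness, is what is needed.
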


 	\section{Applications}
 		\subsection{Approximation of pair with $\mathbb{R}-$boundary}
 		
 	\begin{proof}[Proof of Corollary \ref{approximation of real divisor}]
 		If $\Delta$ is a $\mathbb{Q}-$divisor, then the result is straight forward. So we may assume it is not a $\mathbb{Q}-$divisor.
 		
 		First we show there is an effective $\mathbb{Q}-$divisor $B$, such that $\Delta-B\geq 0$, and $\kappa(X,K_X+B)\geq 0$.
 		
 		Since $\kappa(X,K_X+\Delta)=k\geq 0$. There is a positive integer $m$ sufficiently divisible, such that
 		$$h^0(X,\mathcal{O}_X(mK_X+\lfloor m\Delta \rfloor))>0.$$
 		So we can choose $B:=\frac{1}{m} \lfloor m\Delta \rfloor$.
 		
 		Now we may write it as an $\mathbb{R}-$linear combination of $\mathbb{Q}-$divisor
 		$$\Delta=B+\sum_{j=1}^{m} \alpha_j E_j.$$
 		Let $V$ be the space spanned by $E_j,1\leq j\leq m$. Then it is easy to see that
 		$\Delta \subset \mathrm{int}(\mathcal{L}_B(V))$.
 		
 		Because $\kappa(X,K_X+B)\geq 0$, therefore by Proposition \ref{prop}, for any $\mathbb{R}-$divisor $D\in\mathrm{int}(\mathcal{L}_B(V))$, we have $\kappa_\iota(X,K_X+D)=k$. Choose a rational polytope $L\subset \mathrm{int}(\mathcal{L}_B(V))$ that contains $\Delta$, then by Theorem \ref{main}, we may assume that for every $\mathbb{Q}-$divisor $D\in L$, $K_X+D$ has the same ample model.
 		
 		Then by \cite[Lemma 3.7.7]{BCHM}. We can find $\mathbb{Q}-$divisors $D_i\in L,1\leq i\leq l$, such that $||\Delta-D_i||\leq \epsilon$ and $\Delta$ is a convex linear combination of $D_i$. Let $Z$ be the ample model of $K_X+D_1$, then $Z\cong \mathrm{Proj}\ R(X,D_i)$ for any $D_i,1\leq i\leq l$.
 	\end{proof}
 \subsection{Nonvanishing}

\begin{proof}[Proof of Corollary \ref{nonvanishing}]
	Because $H$ is $\mathbb{Q}-$effective, $\kappa(X,K_X+\Delta+tH)$ is a nondecreasing function of $t$. Also by Proposition \ref{prop}, for $0<t_1<t_2<t_3$, we have $\kappa(X,K_X+\Delta+t_2H)>\max\{\kappa(X,K_X+\Delta+t_1H),\kappa(X,K_X+\Delta+t_3H)\}$. Thus there is an integer $0\leq k<n$, such that for $t>0$,
	$\kappa(X,K_X+\Delta+tH)=k$.

	If $k=0$, then $\kappa(X,K_X+\Delta+\epsilon H)=0$ and $h^0(X,\mathcal{O}_X(m(K_X+\Delta+\epsilon H)))\leq 1$ for $\epsilon$ small enough. Choose $\epsilon_0$ small enough, there exists an integer $m_0$ and a rational function $f_0$ on $X$, such that,
	$$\mathrm{div}(f_0)+m_0(K_X+\Delta+\epsilon_0 H)\geq 0,$$ 
	Similarly, for any rational number $0<\epsilon<\epsilon_0 $, we can find $f_\epsilon$ and $m_\epsilon$, such that
	$$\mathrm{div}(f_\epsilon)+m_\epsilon(K_X+\Delta+\epsilon H)\geq 0,$$ 
	Since $H\geq 0$, it is easy to see that $\mathrm{div}(f_\epsilon^{m_0})+m_0m_\epsilon(K_X+\Delta+\epsilon_0 H)\geq 0$ and $\mathrm{div}(f_0^{m_\epsilon})+m_0m_\epsilon(K_X+\Delta+\epsilon_0 H)\geq 0$.
	 
	Since $h^0(X,\mathcal{O}_X(m_0m_\epsilon(K_X+\Delta+\epsilon H)))\leq 1$, we have 
	$$\mathrm{div}(f_\epsilon^{m_0})+m_0m_\epsilon(K_X+\Delta+\epsilon_0 H)=\mathrm{div}(f_0^{m_\epsilon})+m_0m_\epsilon(K_X+\Delta+\epsilon_0 H).$$
	This implies that $\frac{1}{m_0}\mathrm{div}(f_0)=\frac{1}{m_\epsilon}\mathrm{div}(f_\epsilon)$. Therefore taking the limit for $\epsilon\rightarrow 0$, we have $\frac{1}{m_0}\mathrm{div}(f_0)+(K_X+\Delta)\geq 0$, which implies $\kappa(X,K_X+\Delta)\geq 0$.
	
	Next we consider when $k\geq 1$. Choose $t_0\ll 1$, let $h:X'\rightarrow Y$ be a projective morphism of smooth projective varieties such that the restriction $h_F$ over the generic fiber $F$ of $h$ is birational to the Iitaka fibration of $K_X+\Delta+t_0H$. By Lemma \ref{thm2}, we have $h_F$ is birational to the Iitaka fibration of $K_X+\Delta+tH$ for any $0<t\ll1$.
	
	Let $\Delta' :=\mu_* ^{-1}\Delta$, $H':=\mu_*^{-1}H$, $E$ be the exceptional divisor of $\mu$. Since $(X,\Delta)$ is klt, for $0\leq t\ll 1$, we have 
	$$ \mu_*\mathcal{O}_{X'}(m(K_{X'}+\Delta' +tH')) \cong \mathcal{O}_X(m(K_X+\Delta+tH)),\ \ \forall m\in \mathbb{N},$$
	Therefore for $t\geq 0$, $\kappa(X,K_X+\Delta+tH)\geq 0$ if and only if $\kappa(X',K_{X'}+\Delta'+tH')\geq 0$. Thus, to prove $\kappa(X,K_X+\Delta)\geq 0$, we only need to prove $\kappa(X',K_{X'}+\Delta')\geq 0$. So we may replace $X$ by $X'$(respectively $\Delta$ by $\Delta'$, $H$ by $H'$) and assume that there is an equidimensional algebraic fibration $h:X\rightarrow Y$.
	
	Let $F$ denote the generic fiber of $h$. By properties of the Iitaka fibration, we have that for all $0<t\ll1$,
	$$ \kappa(F,K_F+\Delta_F+tH_F)=0.$$
	This means on the generic fiber $F$, $ K_F+\Delta_F$ is the limit of effective divisors, therefore it is pseudo effective. Since we assume Conjecture \ref{conj2} holds in dimension $\leq n-1$, then $ \kappa(F,K_F+\Delta_F)\geq 0$, and it is easy to see that
	$$\kappa(F,K_F+\Delta_F)=0.$$
	
	Let $m$ be sufficiently divisible, such that $m(K_X+\Delta)$ is Cartier, since $h$ is equidimensional and $ \kappa(F,K_F+\Delta_F)=0$, $h_*\mathcal{O}_X(m(K_X+\Delta))$ is a reflexive sheaf of rank 1. Also because $Y$ is smooth, $h_*\mathcal{O}_X(m(K_X+\Delta))$ is an invertible sheaf. So there is a Cartier divisor $D'$ on $Y$, such that 
	\begin{equation}
	\label{nv}
	h_*\mathcal{O}_{X}(m(K_X+\Delta))=\mathcal{O}_Y(D').
	\end{equation}
	Let $A'$ be a sufficiently ample divisor on $Y$, such that $D'+A'$ is ample on $Y$. Therefore $h_*\mathcal{O}_{X}(m(K_X+\Delta)+h^*A)=\mathcal{O}_Y(D'+A')$ is ample on $Y$ and $m(K_X+\Delta)+h^*A'$ is big, let $A:=\frac{1}{m}A'$. By Remark \ref{piecewise linear}, for $0\leq t\ll 1$, we can find $D'_t$ and $F_t$ such that 
	$$K_X+\Delta+h^*A+tH\sim_{\mathbb{Q}} h^*D'_t+F_t$$
	where $D'_t$ is a linear function of $t$ and $\mathcal{O}_Y\cong h_*\mathcal{O}_X(\lfloor iF\rfloor)$ for all $i\geq 0$. Let $D_t:=D'_t-A$, which is also a linear function of $t$, and we have 
	$$K_X+\Delta+tH\sim_{\mathbb{Q}} h^*D_t+F_t.$$
	Since for $t>0$, we have $\kappa(X,K_X+\Delta+tH)= \mathrm{dim}(Y)$, which means $D_t$ is big for every $t>0$. Therefore $D_0$ is pseudo effective. By the Canonical bundle formula, perhaps replacing $h:X\rightarrow Y$ by a higher model, we can find $\mathbb{Q}-$divisors $B$ and $J$ on $Y$ such that
	$$D_0\sim_{\mathbb{Q}} K_Y+B+J.$$
	where $(Y,B)$ is klt pair and $J$ is nef. 
	
	If Conjecture \ref{conj1} holds in dimension$\leq n-1$, $J$ is semiample, choose a general member $C\in |J+B|_\mathbb{Q}$, then $(Y,C)$ is klt, and $ K_Y+C$ is pseudo effective, by Conjecture \ref{conj2} in dimension $\leq n-1$, $\kappa(X,K_Y+C)\geq 0$. Therefore $\kappa(X,K_X+\Delta)\geq 0$.
\end{proof}

\subsection{MMP with scaling}
\begin{defn}(MMP with scaling)
	Let $(X_1,\Delta_1)$ and $(X_1,\Delta_1+H_1)$ be two klt pairs such that $K_{X_1}+\Delta_1+H_1$ is nef, $\Delta_1\geq 0$, and $H_1$ is $\mathbb{Q}-$Cartier and pseudo effective. Suppose that either $K_{X_1}+\Delta_1$ is nef or there is an extremal ray $R_1$ such that $(K_{X_1}+\Delta_1).R_1 <0$ and $(K_{X_1}+\Delta_1+\lambda_1 H_1).R_1=0$ where
	$$\lambda_1:=\mathrm{inf}\{t\geq 0\ |\ K_{X_1}+\Delta_1 +tH_1 \mathrm{is\ nef}\}$$
	Now, if $K_{X_1}+\Delta_1$ is nef or if $R_1$ defines a Mori fibre structure, we stop. Otherwise assume that $R_1$ gives a divisorial contraction or a log flip $X_1\dashrightarrow X_2$. We can now consider $(X_2,\Delta_2+\lambda_1 H_2)$ where $\Delta_2+\lambda_1H_2$ is the birational transform of $ \Delta_1+\lambda_1H_1$ and continue. That is, suppose that either $K_{X_2}+\Delta_2$ is nef or there is an extremal ray $R_2$ such that $(K_{X_2}+\Delta_2).R_2<0$ and $ (K_{X_2}+\Delta_2+\lambda_2H_2).R_2=0 $ where 
	$$\lambda_2:=\mathrm{inf}\{t\geq 0\ |\ K_{X_2}+\Delta_2 +tH_2 \mathrm{is\ nef}\}$$
	By continuing this process, we obtain a sequence of numbers $\lambda_i$ and a special kind of MMP which is called the MMP on $K_{X_1}+\Delta_1$ with scaling of $H_1$. Note that by definition $\lambda_i \geq \lambda_{i+1} $ for every $i$.
\end{defn}
\begin{proof}[Proof of Theorem \ref{MMP with scaling}]
	Because $K_X+\Delta+ H$ has a good minimal model, by Theorem \ref{vandk}, $\nu(X,K_X+\Delta+H)=\kappa(X,K_X+\Delta+H)=k$ for some nonnegative integer $k$. Since $H$ is pseudo effective, by Proposition \ref{prop}. (4), $\nu(X,K_X+\Delta +tH)\leq k,\ \forall t\in [0,1]$. By assumption, $\kappa(X,K_X+\Delta)\geq 0$, therefore we have 
	$$ k\geq \nu(X,K_X+\Delta+tH)\geq \kappa(X,K_X+\Delta+tH )\geq  \mathrm{max}\{\kappa(X,K_X+\Delta),\ \kappa(X,K_X+\Delta+H)\}\geq k,$$
	for every $t\in (0,1]$. Then $(X,K_X+\Delta+tH)$ has a good minimal model for any $ t\in (0,1]$.
	
	Suppose $\lambda>0$. By Theorem \ref{main}, there exists $\epsilon>0$, such that for every $t\in[\lambda,\lambda+\epsilon]$, $K_X+\Delta+tH$ has the same ample model $Z$. Consider the interval $ I:=[\lambda,\lambda+\epsilon]$. By definition of MMP with scaling, we have infinitely many birational contractions $\phi _i$, such that $\phi_i$ is a minimal model for $K_X+\Delta+tH,t\in[\lambda_i,\lambda_{i+1}]$, and $K_{X_i}+\phi_{i*}\Delta+t\phi_{i*}H$ is not nef if $t>\lambda_{i+1}$. 
	
	On the other hand, by Theorem \ref{main2}, there are finitely many birational contraction $f_j:X\dashrightarrow Y_j,1\leq j\leq m$ and we can divide $I$ into finitely many closed interval $I= \cup _{1\leq j\leq m}[t_j,t_{j+1}]$, such that for $ t\in(t_j,t_{j+1})$, $f_j$ is a good minimal model for $K_X+\Delta+tH$. It is easy to see that there exist $i,j$, such that $ t_j<\lambda_i<\lambda_{i+1}<t_{j+1}$.
	
	Consider two rational number $r_1,r_2\in (\lambda_i,\lambda_{i+1})$. By assumption, $ \phi_i$ and $f_j$ are respectively good minimal model of $K_X+\Delta +r_1H$ and $K_X+\Delta+r_2H$. Let $ h: X_i\rightarrow Z$ and $ g:Y_j\rightarrow Z$ be the morphism from good minimal model to ample model, then there are two divisors $D_1,D_2$ on $Z$, such that $ K_{X_i}+\phi_{i*}\Delta+r_k\phi_{i*} H \sim_{\mathbb{Q}} h^*D_k$ and $ K_{Y_j}+f_{j*}\Delta +r_k f_{j*} H\sim_{\mathbb{Q}} g^*D_k$ for $k=1,2$. Therefore by linearity, for all $t\in[0,1]$, $( X_i,\phi_{i*}\Delta+t\phi_{i*} H)$ is crepant birational with $(Y_j, f_{j*}\Delta +t f_{j*} H)$, which means $\phi_i$ is also a minimal model for $ K_X+\Delta +tH, t\in [t_j,t_{j+1}]$, this contradicts with the definition of MMP with scaling. 
\end{proof}
	\begin{bibdiv}
\begin{biblist}

\bib{Abramovich2000}{article}{
      author={Abramovich, D.},
      author={Karu, K.},
       title={Weak semistable reduction in characteristic 0},
        date={2000Feb},
        ISSN={1432-1297},
     journal={Inventiones mathematicae},
      volume={139},
      number={2},
       pages={241\ndash 273},
         url={https://doi.org/10.1007/s002229900024},
}

\bib{ambro2004}{article}{
      author={Ambro, Florin},
       title={Shokurov's boundary property},
        date={2004},
     journal={J. Differential Geom.},
      volume={67},
      number={2},
       pages={229\ndash 255},
         url={https://doi.org/10.4310/jdg/1102536201},
}

\bib{ambro_2005}{article}{
      author={Ambro, Florin},
       title={The moduli b-divisor of an lc-trivial fibration},
        date={2005},
     journal={Compositio Mathematica},
      volume={141},
      number={2},
       pages={385–403},
}

\bib{BCHM}{article}{
      author={Birkar, Caucher},
      author={Cascini, Paolo},
      author={Hacon, Christopher},
      author={McKernan, James},
       title={Existence of minimal models for varieties of log general type},
        date={2006},
     journal={Journal of the American Mathematical Society},
      volume={23},
}

\bib{birkar_2010}{article}{
      author={Birkar, Caucher},
       title={On existence of log minimal models},
        date={2010},
     journal={Compositio Mathematica},
      volume={146},
      number={4},
       pages={919–928},
}

\bib{Birkar2011}{article}{
      author={Birkar, Caucher},
       title={On existence of log minimal models {II}},
        date={2011-01},
     journal={Journal f\"{u}r die reine und angewandte Mathematik (Crelles
  Journal)},
      volume={2011},
      number={658},
         url={https://doi.org/10.1515/crelle.2011.062},
}

\bib{Birkar2012}{article}{
      author={Birkar, Caucher},
       title={Existence of log canonical flips and a special {LMMP}},
        date={2012},
     journal={Publications math{\'{e}}matiques de l’{IH}{\'{E}}S},
      volume={115},
      number={1},
       pages={325\ndash 368},
         url={https://doi.org/10.1007/s10240-012-0039-5},
}

\bib{Choi2008TheGO}{inproceedings}{
      author={Choi, Sung~Rak},
       title={The geography of log models and its applications},
        date={2008},
}

\bib{2007}{book}{
      editor={Corti, Alessio},
       title={Flips for 3-folds and 4-folds},
   publisher={Oxford University Press},
        date={2007},
         url={https://doi.org/10.1093/acprof:oso/9780198570615.001.0001},
}

\bib{fujino2000}{article}{
      author={Fujino, Osamu},
      author={Mori, Shigefumi},
       title={A canonical bundle formula},
        date={2000},
     journal={J. Differential Geom.},
      volume={56},
      number={1},
       pages={167\ndash 188},
         url={https://doi.org/10.4310/jdg/1090347529},
}

\bib{gongyo_lehmann_2013}{article}{
      author={Gongyo, Yoshinori},
      author={Lehmann, Brian},
       title={Reduction maps and minimal model theory},
        date={2013},
     journal={Compositio Mathematica},
      volume={149},
      number={2},
       pages={295–308},
}

\bib{Gongyo2011}{article}{
      author={Gongyo, Yoshinori},
       title={On the minimal model theory for {DLT} pairs of numerical log
  kodaira dimension zero},
        date={2011},
     journal={Mathematical Research Letters},
      volume={18},
      number={5},
       pages={991\ndash 1000},
         url={https://doi.org/10.4310/mrl.2011.v18.n5.a16},
}

\bib{hashizume2019minimal}{misc}{
      author={Hashizume, Kenta},
      author={Hu, Zhengyu},
       title={On minimal model theory for log abundant lc pairs},
        date={2019},
}

\bib{HMX}{article}{
      author={Hacon, Christopher~D.},
      author={Xu, Chenyang},
       title={Existence of log canonical closures},
        date={2013},
        ISSN={1432-1297},
     journal={Inventiones mathematicae},
      volume={192},
      number={1},
       pages={161\ndash 195},
         url={https://doi.org/10.1007/s00222-012-0409-0},
}

\bib{10.1007/BFb0076994}{inproceedings}{
      author={Kawamata, Yujiro},
       title={On the minimal model problem},
        date={1986},
   booktitle={Complex analysis and algebraic geometry},
      editor={Grauert, Hans},
   publisher={Springer Berlin Heidelberg},
     address={Berlin, Heidelberg},
       pages={41\ndash 55},
}

\bib{Kollr1998}{book}{
      author={Koll{\'{a}}r, Janos},
      author={Mori, Shigefumi},
       title={Birational geometry of algebraic varieties},
   publisher={Cambridge University Press},
        date={1998},
         url={https://doi.org/10.1017/cbo9780511662560},
}

\bib{Kawamata}{inproceedings}{
      author={Kawamata, Yujiro},
      author={Matsuda, Katsumi},
      author={Matsuki, Kenji},
       title={Introduction to the minimal model problem},
   booktitle={Algebraic geometry, sendai, 1985},
   publisher={Mathematical Society of Japan},
         url={https://doi.org/10.2969/aspm/01010283},
}

\bib{Kollr2007}{incollection}{
      author={Koll{\'{a}}r, J{\'{a}}nos},
       title={Kodaira’s canonical bundle formula and adjunction},
        date={2007-06},
   booktitle={Flips for 3-folds and 4-folds},
   publisher={Oxford University Press},
       pages={134\ndash 162},
         url={https://doi.org/10.1093/acprof:oso/9780198570615.003.0008},
}

\bib{Lai2010}{article}{
      author={Lai, Ching-Jui},
       title={Varieties fibered by good minimal models},
        date={2010-08},
     journal={Mathematische Annalen},
      volume={350},
      number={3},
       pages={533\ndash 547},
         url={https://doi.org/10.1007/s00208-010-0574-7},
}

\bib{Lazarsfeld2004}{book}{
      author={Lazarsfeld, Robert},
       title={Positivity in algebraic geometry i},
   publisher={Springer Berlin Heidelberg},
        date={2004},
         url={https://doi.org/10.1007/978-3-642-18808-4},
}

\bib{ZDA}{book}{
      author={Nakayama, Noboru},
       title={Zariski-decomposition and abundance},
   publisher={The Mathematical Society of Japan},
        date={2004},
         url={https://doi.org/10.2969/msjmemoirs/014010000},
}

\bib{Shokurov2011}{article}{
      author={Shokurov, Vyacheslav~V.},
      author={Choi, Sung~Rak},
       title={Geography of log models: theory and applications},
        date={2011-02},
     journal={Central European Journal of Mathematics},
      volume={9},
      number={3},
       pages={489\ndash 534},
         url={https://doi.org/10.2478/s11533-011-0013-3},
}

\end{biblist}
\end{bibdiv}

\end{document}